\newtheorem{thm}{\bf Theorem}[section]
\newtheorem{prop}[thm]{\bf Proposition}
\newtheorem{lem}[thm]{\bf Lemma}
\newtheorem{cor}[thm]{\bf Corollary}
\theoremstyle{definition}\newtheorem{exa}[thm]{\bf Example}
\theoremstyle{definition}\newtheorem{de}[thm]{\bf Definition}
\theoremstyle{definition}\newtheorem{rem}[thm]{\bf Remark}
\theoremstyle{definition}
\numberwithin{equation}{section}
\DeclareMathOperator{\p}{\mathbb P}
\begin{document}
\title[The Heisenberg groups]{The fundamental and rigidity theorems for pseudohermitian submanifolds in the Heisenberg groups}
\author[Hung-Lin Chiu]{Hung-Lin Chiu}
\address{Department of Mathematics, National Tsing Hua University, Hsinchu, Taiwan 300, R.O.C.}
\email{hlchiu@math.nthu.edu.tw}
\thanks{Email: hlchiu@math.nthu.edu.tw}
\keywords{}\subjclass{Primary 32V05, 32V20, 32V30; Secondary 53C56.}
\renewcommand{\subjclassname}{\textup{2000} Mathematics Subject Classification}
\keywords{motion equations, structure equations, Darboux frame, Darboux derivative.}

\begin{abstract} 
In this paper, we study some basic geometric properties of pseudohermitian submanifolds of the Heisenberg groups. In particular, we obtain the uniqueness and existence theorems, and some rigidity theorems. 
\end{abstract}

\maketitle

\section{Introduction}

In this paper, for $m\leq n$, we specify the ranges of indices as following
\begin{equation*}
\begin{split}
1&\leq\alpha,\beta,\gamma,\sigma,\rho,\cdots\leq n\\
1&\leq j,k,l,\cdots\leq m\\
m+1&\leq a,b,c,\cdots\leq n\\
1&\leq A,B,C,\cdots\leq 2n\\
\end{split}
\end{equation*}

\subsection{The Heisenberg groups} The origin of pseudohermitian geometry came from the construction of a pseudohermitian connection, independently by N. Tanaka \cite{T} and S. Webster \cite{We}. In this paper, the Heisenberg group is a pseudohermitian manifold and it plays the role of the model in pseudohermitian geometry. That is, any pseudohermitian manifold with vanishing curvature and torsion locally is part of the Heisenberg group.
Let $H_{n}$ be the Heisenberg group, with coordinates $(x_{\beta},y_{\beta},t)$. The group multiplication is defined by
\begin{equation*}
(x,y,t)\circ(x',y',t')=(x+x',y+y',t+t'+yx'-xy').
\end{equation*}
The associated standard CR structure $J$ and contact form $\Theta$ are defined respectively by 
\begin{equation*}
\begin{split}
J\mathring{e}_{\beta}&=\mathring{e}_{n+\beta};\ J\mathring{e}_{n+\beta}=-\mathring{e}_{\beta}\\ 
\Theta&=dt+\sum_{\beta=1}^{n}x_{\beta}dy_{\beta}-y_{\beta}dx_{\beta}, 
\end{split}
\end{equation*}
where 
\begin{equation*}
\mathring{e}_{\beta}=\frac{\partial}{\partial x_{\beta}}+y_{\beta}\frac{\partial}{\partial t},\ \  \mathring{e}_{n+\beta}=\frac{\partial}{\partial y_{\beta}}+x_{\beta}\frac{\partial}{\partial t}.
\end{equation*}
The contact bundle is $\xi=\textrm{ker} \Theta$. We refer the reader to \cite{CCG},\cite{CC} and \cite{CL} for the details about the Heisenberg groups, and to \cite{DT},\cite{Le1},\cite{Le2},\cite{T} and \cite{We} for pseudohermitian geometry.

The symmetry group $PSH(n)$ of $H_{n}$ is the group consisting of all pseudohermitian transformations. Left translations $L_{p}$ are a symmetry. Another kind of examples are a rotation $\Phi_{R}$ around the $t$-axis which is defined by
\begin{equation*}
	\Phi_{R}\left(\begin{array}{c}x\\y\\t
	\end{array}\right)=\left(\begin{array}{cc}R&0\\0&1
	\end{array}\right)\left(\begin{array}{c}x\\y\\t
	\end{array}\right)
\end{equation*}
,where $R=\left(\begin{array}{cc}A&-B\\B&A\end{array}\right)\in SO(2n)$. In \cite{CL}, we showed that each symmetry $\Phi\in PSH(n)$ has the unique decomposition  $\Phi=L_{p}\circ\Phi_{R}$, for some $p\in H_{n}$ and $R\in SO(2n)$. Since the action of $PSH(n)$ in $H_{n}$ is transitive, the Heisenberg group is a kind of Klein geometry. The corresponding Cartan geometry is just pseudohermitian geometry.

\subsection{Pseudohermitian submanifolds} We now give the definition of pseudohermitian submanifold.
\begin{de}
A $(2m+1)$-dimensional pseudohermitian manifold $(M,\hat{J},\hat{\theta})$ is called a pseudohermitian\footnote{In \cite{DT}, S. Dragomir and G. Tomassini called it {\bf isopseudo-hermitian}, instead of {\bf pseudohermitian}.} submanifold of $H_{n}$, $1\leq m\leq n$, if 
\begin{itemize}
\item $\hat{\xi}=TM\cap\xi$; 
\item $\hat{J}=J|_{\hat{\xi}}$; 
\item $\hat{\theta}=\Theta|_{M}$,
\end{itemize}
where $\hat{\xi}=\textrm{ker}\hat{\theta}$ is the contact structure on $M$.
\end{de}

\begin{exa}Suppose $M\hookrightarrow H_{n}$ is an embedded submanifold with CR dimension $n-1$. Then it is not hard to see that
 \begin{itemize}
 \item In general, $\textrm{dim}(T_{p}M\cap\xi_{p})\geq 2n-2$, for all $p\in M$.
 \item $\textrm{dim}(T_{p}M\cap\xi_{p})= 2n-2$, for generic point $p\in M$. 
 \end{itemize}
 
 All the generic points constitute the regular part of $M$, and those points $p$ such that  $\textrm{dim}(T_{p}M\cap\xi_{p})=2n-1$ are called the singular points. On the regular part $M_{re}$, assume that $T_{p}M\cap\xi$ is invariant under $J$, then it inherits a pseudohermitian structure $(\hat{J},\hat{\theta})$ from $H_{n}$ such that $(M_{re},\hat{J},\hat{\theta})$ is a pseudohermitian submanifold of $H_{n}$.
 \end{exa}
 In Section \ref{loin}, we define some local invariants for pseudohermitian submanifolds, including the second fundamental form, the normal connection and the fundamental vector field $\nu$. In addition, from Proposition \ref{fuve}, we see that the fundamental vector field $\nu$ actually describes the difference between the two Reeb vector fields $T$ and $\hat{T}$, which are, respectively,  associated with $H_{n}$ and the pseudohermitian submanifold $M$. Hence if $\nu=0$ then $\hat{T}=T$. That means that $T=\frac{\partial}{\partial t}$ is always tangent to $M$ at each point. Therefore, for such kind a submanifold, we call it {\bf vertical}\footnote{In \cite{DT}, S. Dragomir and G. Tomassini just call it {\bf pseuo-Hermitian}, instead of {\bf vertical pseudohermitian}.}. On the other hand, if $\nu\neq 0$ at each point, we call it {\bf completely non-vertical}. 
 
 \begin{exa} 
 The subspace $H_{m}=\{(z,t)\in H_{n}\ |\ z_{a}=0\}\subset H_{n}$ is a pseudohermitian submanifold of $H_{n}$. It is easy too see that $H_{m}$ is {\bf vertical}.
 \end{exa}

\begin{exa}
Let $S^{2n-1}\subset H_{n}$ be the sphere defined by  
\begin{equation*}
S^{2n-1}=\left\{(z,0)\in C^{n}\subset H_{n}\ |\ \sum_{\beta=1}^{n}z_{\beta}z_{\bar\beta}=1\right\}.
\end{equation*}
There are two pseudohermitian structures induced on $S^{2n-1}$, one is from $H_{n}$ and the other is from $C^{n}$. In subsection \ref{stspsp}, we show that these two induced pseudohermitian structures coincide. In addition, $S^{2n-1}$ is {\bf completely non-vertical}. 
\end{exa}

 \subsection{Main Theorems}
There are many literatures which were given for the problem about CR embeddability of  CR manifolds into spheres. In this paper, we obtain the fundamental theorems and rigidity theorems for pseudohermitian submanifolds in the Heisenberg groups. We have\\

{\bf Theorem A.} The induced pseudohermitian structure, the second fundamental form, the normal connection, as well as the fundamental vector field constitute a complete set of invariants for pseudohermitian submanifolds of the Heisenberg groups.\\

Theorem A is shown in Section \ref{tut}. It specifies that there are only four invariants for pseudohermitian submanifolds. That is, if two pseudohermitian submanifolds have the same such four invariants, then they are locally congruent with each other in the sense that they differ from each other nothing more than an action of a symmetry.

 Any pseudohermitian submanifold $M\subset H_{n}$ automatically satisfies a natural geometric condition which we call {\bf the integrability conditions} (defined in subsection \ref{tic}). Conversely, we will show that it is also a condition for an arbitrary pseudohermitian manifold to be (locally) embedded as a pseudohermitian submanifold of $H_{n}$.\\
  
{\bf Theorem B.} Let $(M^{2m+1},J_{M},\theta_{M})$ be a simply connected pseudohermitian manifolds satisfying the integrability conditions. Then $M$ can be embedded as a pseudohermitian submanifold of the Heisenberg groups $H_{n}$, for some $n\geq m$.\\

Theorem B is shown in Section \ref{tet}. In \cite{KO}, S.-Y. Kim and J.-W. Oh also studied the problem of characterizing pseudohermitian manifolds which are pseudohermitian embeddable into the Heisenberg groups\footnote{ In their paper, they used pseudohermitian flat sphere as ambient space, instead of the Heisenberg group. But after a Cayley transformation, this two spaces are isomorphic as pseudohermitian manifolds.}. In the case that $M$ is nondegenerate, S.-Y. Kim and J.-W. Oh used Cartan's prolongation method to show that the induced pseudohermitian structure constitutes a complete set of invariants. In addition, they gave a necessary and sufficient conditions, in terms of Webster curvature and torsion tensor, for pseudohermitian manifolds to be embeddable into the Heisenberg groups nondegenerately. This conditions is just equivalent to the integrability conditions which we define in subsection \ref{tic}. However, S.-Y. Kim and J.-W. Oh did not deal with the degenerate cases. 

In the case of CR codimension one, the nondegenerate just means that the second fundamental form does not vanish at each point. In such a case, we basically recover the results of S.-Y. Kim and J.-W. Oh. Moreover, we give the rigidity theorems for pseudohermitian degenerate submanifolds, which are shown in section \ref{trt}. We have\\ 

{\bf Theorem C.} Let $(M,\hat{J},\hat{\theta})$ be a {\bf vertical}, simply connected pseudohermitian submanifold of $H_{n}$ with CR dimension $m=n-1$. Then we have that\\
(i) if the second fundamental form $II\neq 0$ at each point, then the induced pseudohermitian structure $(\hat{J},\hat{\theta})$ constitute a complete set of invariants.\\
(ii) if $II=0$, then $M$ is an open part of $H_{n-1}=\{z_{n}=0\}$, after a Heisenberg rigid motion.\\

{\bf Theorem D.} Let $(M,\hat{J},\hat{\theta})$ be a {\bf completely non-vertical}, simply connected pseudohermitian submanifold of $H_{n}$ with CR dimension $m=n-1$. Then we have that\\
(i) the induced pseudohermitian structure $(\hat{J},\hat{\theta})$ constitute a complete set of invariants.\\
(ii) if the second fundamental form $II=0$ (or, equivalently, the pseudohermitian torsion $A_{jk}=0,\ 1\leq j,k\leq m$), then $M$ is an open part of the standard sphere $S^{2m+1}\subset H_{n}$, after a Heisenberg rigid motion.\\

Finally, in subsection \ref{pssuve}, we study the general properties of vertical pseudohermitian submanifolds and obtain\\

{\bf Theorem E.} Let $(M,\hat{J},\hat{\theta})$ be a {\bf vertical} pseudohermitian submanifold of $H_{n}$. Then we have that the Webster-torsion vanishes and the Webster-Ricci tensor is non-positive, as well as the pseudohermitian connection and the tangential connection coincide.\\

For the fundamental theorems, we used Cartan's method of moving frame as well as calculus on Lie groups. And we prove (ii) of Theorem D by means of the motions equation of the Darboux frame. Therefore, in section \ref{camemf}, we give a brief review of the Cartan's method of moving frame, which includes the motion equations and the structure equations.\\

{\bf Acknowledgments.} The author's research was supported in part by NCTS and in part by MOST 106-2115-M-007-017-MY3. He would like to thank Prof. Jih-Hsin Cheng, Prof. Jenn-Fang Hwang, and Prof. Paul Yang for regular kind encouragement and advising in his research.

\section{Cartan's method of moving frame}\label{camemf}
In this section, we give a brief review of Cartan's method of moving frame and Calculus on Lie groups. For the details, we refer reader to \cite{CL}. Let $(X,G)$ be a Klein geometry. The philosophy of Elie Cartan is that in many cases, the symmetry group $G$ may be identified with a set of frame on $X$. Then to investigate the geometry of a submanifold $M$ of $X$, one associate the submanifold with a natural set of frames. In this situation, the infinitesimal motion of this natural frame should  contain all the geometric  information of the submanifold $M$. Now we will go along the idea of Elie Cartan to get a complete set of invariants for $M$.

\subsection{The frames on $H_{n}$} 
An {\bf frame} for $H_n$ is a set of vectors of the form
\begin{equation*}
(p;e_{\beta},e_{n+\beta},T),
\end{equation*}
where $p\in H_n,\ e_{\beta}\in\xi(p)$ and $e_{n+\beta}=Je_{\beta}$, for $1\leq\beta\leq n$. In addition $\{e_{\beta},e_{n+\beta},T\}$ is an orthonormal frame with respect to the adapted metric $g_{\theta}$, which is defined by viewing the basis $\mathring{e}_{\beta},\mathring{e}_{n+\beta},T$ as an orthonormal basis.

\subsection{Identifying $PSH(n)$ with a set of frames}
We identify a symmetry $\Phi$ with a frame $(p;e_{\beta},e_{n+\beta},T)$, provided that $\Phi$ is the unique transformation on $H_n$ mapping the frame $(0;\mathring{e}_{\beta},\mathring{e}_{n+\beta},T)$ to the given frame $(p;e_{\beta},e_{n+\beta},T)$. That is,
\begin{equation*}
\begin{split}
\Phi_{*}(0;\mathring{e}_{\beta},\mathring{e}_{n+\beta},T)&=(\Phi(0);\Phi_{*}\mathring{e}_{\beta},\Phi_{*}\mathring{e}_{n+\beta},\Phi_{*}T)\\
&=(p;e_{\beta},e_{n+\beta},T)
\end{split}
\end{equation*}

\subsection{The matrix group representation of $PSH(n)$} If we identify points of $H_{n}$ and $1\times H_{n}$ by
\begin{equation*}
p\leftrightarrow
\left(\begin{array}{c}1\\p
\end{array}\right),
\end{equation*}
hence a vector $X\in TH_{n}$ can be identified by
\begin{equation*}
X\leftrightarrow
\left(\begin{array}{c}0\\X
\end{array}\right).
\end{equation*}
We thus identify $\Phi$ with a matrix $A\in GL(2n+2,R)$ by
\begin{equation}\label{grre}
\Phi\leftrightarrow (p;e_{\beta},e_{n+\beta},T)\leftrightarrow A=
\left(\begin{array}{cccc}1&0&0&0\\
p&e_{\beta}&e_{n+\beta}&T
\end{array}\right).
\end{equation}
We have 
\begin{equation*}
A\left(\begin{array}{c}1\\q
\end{array}\right)=\left(\begin{array}{c}1\\ \tilde{q}
\end{array}\right),\ \ \tilde{q}=\Phi(q).
\end{equation*}
This shows that (\ref{grre}) gives a matrix group representation of $PSH(n)$.

\subsection{The motion equations}
Let $\omega$ be the (left) Maurer Cartan form of $PSH(n)$. This is a $psh(n)$-valued one form defined by
\begin{equation}
\omega(v)=L_{g^{-1}*}v,
\end{equation}
for each $v\in T_{g}G$, where $G=PSH(n), g\in G$. That is, the Maurer Cartan form moves each vector $v$ to the identity element by the left translations. It is a natural way for us to identify each vector $v$ with a vector tangent to the identity. Since $PSH(n)$ has a matrix group representation, The Maurer cartan form has the simple elegant expression
\begin{equation}\label{maca}
\omega=A^{-1}dA,
\end{equation}
where $A\in PSH(n)$ is the moving point.
This formula (\ref{maca}) is equivalent to
\begin{equation}\label{moeq}
dA=A\omega,
\end{equation}
which is called the motion equations of the Heisenberg group. Taking the exterior derivative of the motion equation equation, we get the structure equations
\begin{equation}\label{steq}
d\omega+\omega\wedge\omega=0.
\end{equation}
\subsection{The Darboux frames for pseudohermitian submanifolds}
Let $U\subset M$ be an open subset. For each point $p\in U$, we always choose the frame $\{Z_{\beta},T\}$ such that $Z_{j}\in\hat{\xi}_{C}$ and $Z_{a}\in\hat{\xi}^{\perp}_{C}$, here $\hat{\xi}_{C}=\hat{\xi}\otimes C$ and $\hat{\xi}^{\perp}_{C}=\hat{\xi}^{\perp}\otimes C$. Such a kind of moving frame $p\rightarrow (p;Z_{\beta},T)$ is called the {\bf Darboux frame} (of complex version) over $U$.

Let $\{\theta^{\beta},\theta\}$ be the dual of $\{Z_{\beta},T\}$. Writing $Z_{\beta}=\frac{1}{2}(e_{\beta}-ie_{n+\beta})$ and $\theta^{\beta}=\omega^{\beta}+i\omega^{n+\beta}$. Then $\{e_{A},T\}$ and $\{\omega^{A},\theta\}$ are dual to each other. This frame field $p\rightarrow (p;e_{A},T)$ is the {\bf real version of the Darboux frame}. It is easy to see that $e_{k},e_{n+k}\in\hat{\xi}$, $e_{a},e_{n+a}\in\xi^{\perp}$, and $e_{n+\beta}=Je_{\beta}$.

Denoting $\hat{Z}_{j}=Z_{j}$ and writing $\hat{Z}_{j}=\frac{1}{2}(\hat{e}_{j}-i\hat{e}_{m+j})$. Then we have $\hat{e}_{j}=e_{j},\ \hat{e}_{m+j}=e_{n+j}$. Let $\{\hat{\omega}^{j}, \hat{\omega}^{m+j},\hat{\theta}\}$ be the dual of $\{\hat{e}_{j},\hat{e}_{m+j},\hat{T}\}$. We also denote $\hat{\theta}^{j}=\hat{\omega}^{j}+i\hat{\omega}^{m+j}$.

\subsection{The Darboux Derivative}
Let $f:U\rightarrow PSH(n)$ be a Darboux frame $f(p)=(p;e_{A},T)$. The Darboux derivative $\omega_{f}$ of $f$ is defined by 
\begin{equation}
\omega_{f}=\omega\circ f_{*}=f^{*}\omega.
\end{equation}
Therefore, it is just the usual differential $f_{*}$, provided that we have identified each vector with a vector to the identity element by left translations. From (\ref{maca}),
\begin{equation}
\begin{split}
\omega_{f}&=f^{*}\omega=f^{*}(A^{-1}dA)\\
&=(A\circ f)^{-1}d(A\circ f)=f^{-1}df,
\end{split}
\end{equation}
or, equivalently
\begin{equation}\label{moeqdrfr}
df=f\omega_{f}.
\end{equation}
This is the motion equations for the Darboux frame $f$. Again, taking the exterior derivative, we obtain the structure equations (the integrability conditions)
\begin{equation}
d\omega_{f}+\omega_{f}\wedge\omega_{f}=0.
\end{equation}
Writing
\begin{equation*}
f(p)=(p; e_{\beta}(p),e_{n+\beta}(p),T)
\end{equation*}
and
\begin{equation*}
\omega_{f}=\left(\begin{array}{cccc}
0 & 0 & 0 & 0 \\
\omega^{\beta} & \omega_{\alpha}{}^{\beta} & \omega_{n+\alpha}{}^{\beta} & 0 \\
\omega^{n+\beta} & \omega_{\alpha}{}^{n+\beta} & \omega_{n+\alpha}{}^{n+\beta} & 0 \\
\omega^{2n+1} & \omega^{n+\alpha} & -\omega^{\alpha} & 0
\end{array}\right).
\end{equation*}
Since $\omega_{f}$ is a $psh(n)$-valued one form, the entry forms satisfy
\begin{equation*}
\begin{split}
\omega_{a}{}^{b}&=-\omega_{b}{}^{a},\ \ \textrm{for}\ 1\leq a,b\leq 2n,\\
\omega_{n+\alpha}{}^{n+\beta}&=\omega_{\alpha}{}^{\beta},\ \omega_{\alpha}{}^{n+\beta}=-\omega_{n+\alpha}{}^{\beta},\ \ \  \textrm{for}\ 1\leq\alpha,\beta\leq n.
\end{split}
\end{equation*}

Then the motion equations and structures equations, respectively, become to be
\begin{equation}
\begin{split}
dp&=e_{\beta}\otimes\omega^{\beta}+e_{n+\beta}\otimes\omega^{n+\beta}+T\otimes\omega^{2n+1}\\
de_{\gamma}&=e_{\beta}\otimes\omega_{\gamma}{}^{\beta}+e_{n+\beta}\otimes\omega_{\gamma}{}^{n+\beta}+T\otimes\omega^{n+\gamma}\\
de_{n+\gamma}&=e_{\beta}\otimes\omega_{n+\gamma}{}^{\beta}+e_{n+\beta}\otimes\omega_{n+\gamma}{}^{n+\beta}-T\otimes\omega^{\gamma}\\
dT&=0;
\end{split}
\end{equation}
and 
\begin{equation}
\begin{split}
d\omega^{\beta}&=-\omega_{\alpha}{}^{\beta}\wedge\omega^{\alpha}-\omega_{n+\alpha}{}^{\beta}\wedge\omega^{n+\alpha}\\
d\omega^{n+\beta}&=-\omega_{\alpha}{}^{n+\beta}\wedge\omega^{\alpha}-\omega_{n+\alpha}{}^{n+\beta}\wedge\omega^{n+\alpha}\\
d\omega^{2n+1}&=2\sum_{\alpha=1}^{n}\omega^{\alpha}\wedge\omega^{n+\alpha}\\
d\omega_{\alpha}{}^{\beta}&=-\omega_{\gamma}{}^{\beta}\wedge\omega_{\alpha}{}^{\gamma}-\omega_{n+\gamma}{}^{\beta}\wedge\omega_{\alpha}{}^{n+\gamma}\\
d\omega_{n+\alpha}{}^{\beta}&=-\omega_{\gamma}{}^{\beta}\wedge\omega_{n+\alpha}{}^{\gamma}-\omega_{n+\gamma}{}^{\beta}\wedge\omega_{n+\alpha}{}^{n+\gamma},
\end{split}
\end{equation}

\subsection{The complex version}
Writing
\begin{equation*}
F(p)=(p; Z_{\beta}(p),T),\ \ \textrm{where}\ Z_{\beta}=\frac{1}{2}(e_{\beta}-ie_{n+\beta}),
\end{equation*}
and
\begin{equation}\label{cvdade}
\omega_{F}=\left(\begin{array}{ccc}
0 & 0 & 0  \\
\vartheta^{t}& \theta_{\gamma}{}^{\beta} & 0 \\
\theta& i\bar{\vartheta}& 0
\end{array}\right),
\end{equation}
where $\vartheta=(\theta^{1},\cdots,\theta^{n}),\ \theta^{\beta}=\omega^{\beta}+i\omega^{n+\beta},\ \theta_{\gamma}{}^{\beta}=\omega_{\gamma}{}^{\beta}+i\omega_{\gamma}{}^{n+\beta}$. And hence we have $\theta_{\gamma}{}^{\beta}+\theta_{\bar{\beta}}{}^{\bar{\gamma}}=0$. We have the complex version of motion equations 
\begin{equation}
\begin{split}
dp&=Z_{\beta}\otimes\theta^{\beta}+Z_{\bar\beta}\otimes\theta^{\bar\beta}+T\otimes\theta\\
dZ_{\gamma}&=Z_{\beta}\otimes\theta_{\gamma}{}^{\beta}+\frac{1}{2}T\otimes i\theta^{\bar\gamma}\\
dT&=0.
\end{split}
\end{equation}
And the structure equations is equivalent to
\begin{equation}
d\omega_{F}+\omega_{F}\wedge\omega_{F}=0,
\end{equation}
or
\begin{equation}
\begin{split}
d\theta^{\beta}&=\theta^{\gamma}\wedge\theta_{\gamma}{}^{\beta}\\
d\theta&=i\theta^{\gamma}\wedge\theta^{\bar{\gamma}}\\
d\theta_{\sigma}{}^{\beta}&=\theta_{\sigma}{}^{\gamma}\wedge\theta_{\gamma}{}^{\beta}.
\end{split}
\end{equation}

\subsection{Calculus on Lie groups}
Let $M$ be a {\bf simply connected} smooth manifold, $f:M\rightarrow PSH(n)$ be a smooth map. Recall that The (left) {\bf Darboux derivative} $\omega_{f}$ of $f$ is the $psh(n)$-valued $1$-form defined by $\omega_{f}=\omega\circ f_{*}=f^{*}\omega$. The Darboux derivative plays an important role in the theory of calculus on Lie groups. The fundamental theorems are Theorem \ref{tutthm} and Theorem \ref{tetthm}.

\begin{thm}[{\bf The uniqueness theorem}]\label{tutthm}
Let $f_{1},f_{2}:M\rightarrow PSH(n)$ be smooth maps. Then $\omega_{f_{1}}=\omega_{f_{2}}$ if and only if there exists $g\in PSH(n)$ such that $f_{2}(x)=g\cdot f_{1}(x)$ for all $x\in M$.\
\end{thm}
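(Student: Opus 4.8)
The plan is to prove both implications with the help of the matrix group representation \eqref{grre} of $PSH(n)$ and the motion equations \eqref{moeqdrfr}, writing the group product as matrix multiplication throughout. For the ``if'' direction, suppose $f_2 = g\cdot f_1$, meaning $f_2 = L_g\circ f_1$ where $L_g$ is left translation by the fixed element $g\in PSH(n)$. Since the Maurer--Cartan form is left-invariant, i.e. $L_g^*\omega = \omega$, I would simply compute
\[
\omega_{f_2} = f_2^*\omega = (L_g\circ f_1)^*\omega = f_1^*(L_g^*\omega) = f_1^*\omega = \omega_{f_1}.
\]

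For the ``only if'' direction, assume $\omega_{f_1} = \omega_{f_2}$. The key idea is to form the smooth map $g\colon M\to PSH(n)$, $g(x) = f_2(x)\,f_1(x)^{-1}$ (product and inverse taken in the matrix group), and to show it is constant by differentiating it directly. Using the Leibniz rule together with $d(f_1^{-1}) = -f_1^{-1}(df_1)f_1^{-1}$ and the motion equations $df_i = f_i\,\omega_{f_i}$, I would obtain
\[
dg = (df_2)f_1^{-1} - f_2 f_1^{-1}(df_1)f_1^{-1} = f_2\,\omega_{f_2}\,f_1^{-1} - f_2\,\omega_{f_1}\,f_1^{-1} = f_2(\omega_{f_2} - \omega_{f_1})f_1^{-1}.
\]
Because $\omega_{f_1} = \omega_{f_2}$, this yields $dg = 0$, so $g$ is locally constant; since $M$ is connected (being simply connected by the standing assumption), $g$ equals a fixed element $g_0\in PSH(n)$, whence $f_2(x) = g_0\cdot f_1(x)$ for all $x\in M$.

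The step requiring the most care is the matrix computation of $dg$: because $PSH(n)$ is noncommutative I must preserve the order of the matrix-valued factors and treat the derivative of the inverse correctly, after which the two terms cancel precisely when the Darboux derivatives agree. Apart from this, the argument is a direct consequence of the motion equations; the only global input is connectivity of $M$, and it is worth noting that full simple connectivity is not actually needed here, becoming essential only for the companion existence theorem \ref{tetthm}.
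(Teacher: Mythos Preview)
Your proof is correct and is precisely the standard argument for this classical result. Note, however, that the paper itself does not supply a proof of Theorem~\ref{tutthm}: it is stated as a known fact from the calculus on Lie groups, with the reader referred to \cite{CCL}, \cite{G}, \cite{IL}, \cite{PT}, and \cite{S} for details. Your argument---using the matrix representation \eqref{grre}, the motion equations \eqref{moeqdrfr}, and the direct computation of $d(f_2 f_1^{-1})$---is essentially the textbook proof found in those references (e.g., Sharpe). Your closing observation that only connectedness of $M$ is required for the uniqueness direction, with simple connectedness needed only for the companion existence Theorem~\ref{tetthm}, is accurate.
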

Theorem \ref{tutthm} says that two maps from $M$ into $PSH(n)$ are congruent with each other if and only if they have the same infinitesimal motions. Recall that $\omega_{f}$ satisfies the integrability conditions\begin{equation*}
d\omega_{f}+\omega_{f}\wedge\omega_{f}=0.
\end{equation*}
Conversely, one has
\begin{thm}[{\bf The existence theorem}]\label{tetthm}
Let $\eta$ be a $psh(n)$-valued one form on $M$ satisfying $d\eta+\eta\wedge\eta=0$. Then there is a smooth map $f:U\rightarrow PSH(n)$ such that $\eta|_{U}=\omega_{f}$.
\end{thm}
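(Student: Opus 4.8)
The plan is to prove the local existence statement by the classical graph-and-Frobenius argument, realizing the desired map as a leaf of an integrable distribution on the product $M\times PSH(n)$, and then to obtain the global map (using simple connectivity) by continuation via the uniqueness theorem already established in Theorem~\ref{tutthm}.

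First I would form the product manifold $M\times G$, where $G=PSH(n)$, with projections $\pi_{1}\colon M\times G\to M$ and $\pi_{2}\colon M\times G\to G$, and introduce the $psh(n)$-valued one form
\[
\Omega=\pi_{2}^{*}\omega-\pi_{1}^{*}\eta,
\]
where $\omega=A^{-1}dA$ is the Maurer--Cartan form of $G$ from (\ref{maca}). At a point $(x,g)$ the restriction of $\Omega$ to the fiber direction $T_{g}G$ equals $\omega$, which is a linear isomorphism onto $psh(n)$; hence $\Omega$ has maximal rank and $\mathcal{D}:=\ker\Omega$ is a distribution of rank $\dim M$ that is everywhere transverse to the fibers $\{x\}\times G$. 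A map $f\colon U\to G$ with $\omega_{f}=\eta$ is exactly a map whose graph $x\mapsto(x,f(x))$ is tangent to $\mathcal{D}$, since pulling $\Omega$ back along such a graph returns $\omega_{f}-\eta$.

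The heart of the argument is to show that $\mathcal{D}$ is integrable, i.e. that $d\Omega$ lies in the differential ideal generated by the components of $\Omega$. Using the structure equation $d\omega=-\omega\wedge\omega$ of (\ref{steq}) for $G$ and the hypothesis $d\eta=-\eta\wedge\eta$, and substituting $\pi_{2}^{*}\omega=\Omega+\pi_{1}^{*}\eta$, one computes
\[
d\Omega=-\pi_{2}^{*}(\omega\wedge\omega)+\pi_{1}^{*}(\eta\wedge\eta)
=-\Omega\wedge\Omega-\Omega\wedge\pi_{1}^{*}\eta-\pi_{1}^{*}\eta\wedge\Omega,
\]
where $\wedge$ denotes the matrix wedge product used throughout Section~\ref{camemf}. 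Every term on the right carries a factor of $\Omega$, so $d\Omega$ vanishes on $\mathcal{D}$ and the Frobenius theorem applies. This is the step where the integrability hypothesis $d\eta+\eta\wedge\eta=0$ is indispensable and where the sign bookkeeping in the matrix Lie algebra $psh(n)$ must be handled carefully; I expect it to be the only real obstacle, the remainder being formal.

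Finally I would invoke Frobenius to obtain, through any chosen point $(x_{0},g_{0})$, an integral leaf of $\mathcal{D}$. Because $\mathcal{D}$ is transverse to the fibers and $d\pi_{1}|_{\mathcal{D}}$ is an isomorphism onto $TM$, this leaf is locally the graph of a smooth map $f\colon U\to G$ with $f(x_{0})=g_{0}$; pulling $\Omega$ back along the graph gives $\omega_{f}-\eta=0$, that is, $\eta|_{U}=\omega_{f}$, which is the assertion. To upgrade this to a map defined on all of a simply connected $M$, I would cover $M$ by such local solutions and observe, via Theorem~\ref{tutthm}, that on overlaps any two local solutions differ by a fixed element of $G$; a standard monodromy/analytic-continuation argument along paths, using simple connectivity to guarantee path-independence, then glues them into a single global $f$.
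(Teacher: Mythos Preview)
Your argument is correct and is the standard proof of this result. Note, however, that the paper does not actually supply a proof of Theorem~\ref{tetthm}: it is stated as a known result from calculus on Lie groups, with the single remark that it ``totally depends on Frobenius Theorem'' and a pointer to the references \cite{CCL}, \cite{G}, \cite{IL}, \cite{PT}, \cite{S}. Your graph-distribution construction on $M\times PSH(n)$ with $\Omega=\pi_{2}^{*}\omega-\pi_{1}^{*}\eta$, the verification that $d\Omega\equiv 0$ modulo the ideal generated by the components of $\Omega$ using the Maurer--Cartan equation and the hypothesis $d\eta+\eta\wedge\eta=0$, and the extraction of $f$ as a leaf transverse to the fibers, is precisely the classical argument found in those references (e.g.\ Sharpe). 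One small remark: the theorem as stated in the paper only asserts the existence of a local $f$ on some open $U$, so your final monodromy paragraph, while correct, goes beyond what is claimed; the global statement under simple connectivity is used later but is not part of Theorem~\ref{tetthm} itself.
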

Theorem \ref{tetthm} totally depends on Frobenius Theorem. We will apply theorem \ref{tutthm} to the Darboux frames of pseudohermitian submanifolds. Then, to give Theorem A, we reduce to compute the Darboux derivatives of the Darboux frames. And using Theorem \ref{tetthm}, we obtain Theorem B. For the details about calculus on Lie groups, we refer reader to \cite{CCL},\cite{G},\cite{IL},\cite{PT} and \cite{S}.

\section{Local invariants of Pseudohermitian submanifolds}\label{loin} In this section, we define some geometric invariants for pseudohermitian submanifolds.
\subsection{The fundamental vector field $\nu$} 
\begin{prop}\label{fuve}
Let $(M,\hat{J},\hat{\theta})$ be a pseudohermitian submanifold of $H_{n}$. Then there exists a unique horizontal vector field $\nu\in\hat{\xi}^{\perp}$ such that $T+\nu\in TM$. Actually, denoting $\hat{T}=T+\nu$, it is not hard to see that  $\hat{T}$ is the Reeb vector field associated to $\hat{\theta}$.
\end{prop}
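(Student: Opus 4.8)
The plan is to construct $\nu$ by an orthogonal projection and then to verify that $\hat{T}=T+\nu$ satisfies the two defining equations of the Reeb field of $\hat{\theta}$, namely $\hat{\theta}(\hat{T})=1$ and $d\hat{\theta}(\hat{T},\,\cdot\,)=0$.

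For \textbf{existence} I would first use that $\hat{\theta}=\Theta|_{M}$ is a contact form with $\ker\hat{\theta}=\hat{\xi}$, so that at each $p\in M$ there is some $w\in T_{p}M$ with $\Theta(w)=\hat{\theta}(w)=1$. Since $\Theta(T)=1$ and $\xi=\ker\Theta$, the direct sum $T_{p}H_{n}=\mathbb{R}T\oplus\xi_{p}$ allows me to write $w=T+\eta$ with $\eta\in\xi_{p}$. Splitting $\eta=\eta_{0}+\nu$ along the $g_{\theta}$-orthogonal decomposition $\xi_{p}=\hat{\xi}_{p}\oplus\hat{\xi}_{p}^{\perp}$, with $\eta_{0}\in\hat{\xi}_{p}$ and $\nu\in\hat{\xi}_{p}^{\perp}$, and noting that $\eta_{0}\in\hat{\xi}_{p}\subset T_{p}M$, I obtain $T+\nu=w-\eta_{0}\in T_{p}M$ with $\nu$ horizontal and orthogonal to $\hat{\xi}$; smoothness of $\nu$ follows from projecting a smooth local choice of $w$. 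For \textbf{uniqueness}, if $\nu_{1},\nu_{2}\in\hat{\xi}^{\perp}$ both satisfy $T+\nu_{i}\in TM$, then $\nu_{1}-\nu_{2}=(T+\nu_{1})-(T+\nu_{2})\in TM$ is horizontal, hence lies in $TM\cap\xi=\hat{\xi}$; as it also lies in $\hat{\xi}^{\perp}$ and $\hat{\xi}\cap\hat{\xi}^{\perp}=0$, we get $\nu_{1}=\nu_{2}$.

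It then remains to identify $\hat{T}=T+\nu$ with the Reeb field. The equation $\hat{\theta}(\hat{T})=\Theta(T)+\Theta(\nu)=1$ is immediate, since $\nu\in\xi=\ker\Theta$. For the contraction condition I would use $\hat{\theta}=\iota^{*}\Theta$ for the inclusion $\iota:M\hookrightarrow H_{n}$, so that $d\hat{\theta}=\iota^{*}d\Theta$ and, for $X\in T_{p}M$, $d\hat{\theta}(\hat{T},X)=d\Theta(\hat{T},X)=d\Theta(T,X)+d\Theta(\nu,X)$; the first term vanishes because $T$ is the Reeb field of $\Theta$, i.e.\ $d\Theta(T,\,\cdot\,)=0$. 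Writing $X=X_{0}+c\hat{T}$ with $X_{0}\in\hat{\xi}$, the term $c\,d\Theta(\nu,\hat{T})=c\,d\Theta(\nu,T)$ vanishes for the same reason, leaving $d\Theta(\nu,X)=d\Theta(\nu,X_{0})$ with $\nu,X_{0}\in\xi$.

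The crux is this last step. A direct computation from $d\Theta=2\sum_{\beta}dx_{\beta}\wedge dy_{\beta}$ gives the identity $d\Theta(U,V)=2g_{\theta}(JU,V)$ for $U,V\in\xi$, whence $d\Theta(\nu,X_{0})=2g_{\theta}(J\nu,X_{0})$. Now, because $\hat{J}=J|_{\hat{\xi}}$ forces $\hat{\xi}$ to be $J$-invariant, and $J$ is a $g_{\theta}$-isometry of $\xi$, the orthogonal complement $\hat{\xi}^{\perp}$ is $J$-invariant as well; hence $J\nu\in\hat{\xi}^{\perp}$ and $g_{\theta}(J\nu,X_{0})=0$. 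This gives $d\hat{\theta}(\hat{T},\,\cdot\,)=0$, so $\hat{T}$ is indeed the Reeb field of $\hat{\theta}$. I expect the $J$-invariance of $\hat{\xi}^{\perp}$ to be the only genuinely nontrivial ingredient; everything else reduces to the splitting $T H_{n}=\mathbb{R}T\oplus\xi$ and dimension counting.
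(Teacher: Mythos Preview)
Your proof is correct, but it runs in the opposite direction from the paper's. The paper takes the Reeb field $\hat{T}$ of $\hat{\theta}$ as its starting point (its existence being standard), writes $\hat{T}=aT+\sum_{A}a^{A}e_{A}$ in an adapted frame, and reads off $a=1$ from $\hat{\theta}(\hat{T})=1$ and the vanishing of the $\hat{\xi}$-components from ``$\hat{T}\perp\hat{\xi}$''; it then simply \emph{defines} $\nu=\hat{T}-T$. Uniqueness is argued exactly as you do. By contrast, you build $\nu$ first by projecting an arbitrary $w\in TM$ with $\hat{\theta}(w)=1$, and only afterwards verify the Reeb conditions for $T+\nu$.

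What each approach buys: the paper's route is shorter because it outsources both the existence of a transversal vector and the contraction condition to the known existence of the Reeb field. Your route is more self-contained and, in fact, supplies the justification the paper suppresses: the assertion ``$\hat{T}\perp\hat{\xi}$'' in the paper's argument is precisely your computation that $d\Theta(\nu,X_{0})=2g_{\theta}(J\nu,X_{0})=0$ via the $J$-invariance of $\hat{\xi}^{\perp}$. So your ``crux'' is the same nontrivial ingredient the paper uses implicitly; you have just made it explicit and placed it at the end rather than the beginning.
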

\begin{proof}
Let $\hat{T}=aT+\sum_{A=1}a^{A}e_{A}$, for some coefficients $a,a^{A}$. Since $1=\hat{\theta}(\hat{T})=\theta(\hat{T})=a$ and $\hat{T}\perp\hat{\xi}$, we have $\hat{T}=T+a^{a}e_{a}+a^{n+a}e_{n+a}$, and hence we can choose $\nu=a^{a}e_{a}+a^{n+a}e_{n+a}$. Next, suppose $\tilde{\nu}\in \hat{\xi}^{\perp}$ is another vector such that $T+\tilde{\nu}\in TM$. Then we have 
$\nu-\tilde{\nu}\in TM\cap\xi$, hence $\nu=\tilde{\nu}$.
\end{proof}

\begin{itemize}
\item If $\nu\equiv 0$, then $\hat{T}=T$, and hence we call $M^{2m+1}$ a {\bf vertical} submanifold.
\item If $\nu\neq 0$ at each point of $M$, then $M$ is {\bf completely non-vertical}.
\end{itemize}

\begin{prop}\label{reofcof}
We have 
\begin{equation}
\omega^{j}|_{M}=\hat{\omega}^{j},\ \ \omega^{n+j}|_{M}=\hat{\omega}^{m+j},\ \ \omega^{a}|_{M}=\frac{1}{2}\big<\nu,e_{a}\big>\hat{\theta},\ \ \omega^{n+a}|_{M}=\frac{1}{2}\big<\nu,e_{n+a}\big>\hat{\theta},
\end{equation}
where $\big<\ ,\ \big>$ is the Levi-metric, hence
\begin{equation}
\theta^{j}|_{M}=\hat{\theta}^{j}\ \ \ \theta^{a}|_{M}=\big<\nu,Z_{a}\big>\hat{\theta}.
\end{equation}
In particular, if $\nu=0$, then we have $\theta^{a}|_{M}=0$.
\end{prop}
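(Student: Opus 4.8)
The plan is to verify each identity by evaluating both sides on a basis of $TM$, since two one-forms on $M$ agree precisely when they agree on every tangent vector. By the Darboux frame relations $\hat{e}_{j}=e_{j}$, $\hat{e}_{m+j}=e_{n+j}$ together with Proposition \ref{fuve}, the tangent space $T_{p}M$ is spanned by the $2m+1$ vectors $e_{1},\dots,e_{m},e_{n+1},\dots,e_{n+m}$ and $\hat{T}=T+\nu$, where $\nu=a^{a}e_{a}+a^{n+a}e_{n+a}$ lies in $\hat{\xi}^{\perp}$. The whole computation then reduces to pairing the ambient coframe $\{\omega^{A},\theta\}$, which is dual to $\{e_{A},T\}$, against these basis vectors and matching the result with the dual coframe $\{\hat{\omega}^{j},\hat{\omega}^{m+j},\hat{\theta}\}$ on $M$; the only inputs are this duality, the expansion of $\nu$, and the index ranges $j,k\leq m<m+1\leq a,b$.

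For the tangential forms I would first observe that $\omega^{j}$ and $\omega^{n+j}$ annihilate $\hat{T}$: they kill $T$ by duality, and they kill $\nu$ because $\nu$ is a combination of the $e_{a},e_{n+a}$ whose indices $a,n+a$ never coincide with $j$ or $n+j$. On the vectors $e_{k},e_{n+k}$ they reproduce exactly the Kronecker pairings that define $\hat{\omega}^{j}$ and $\hat{\omega}^{m+j}$, so $\omega^{j}|_{M}=\hat{\omega}^{j}$ and $\omega^{n+j}|_{M}=\hat{\omega}^{m+j}$. For the normal forms $\omega^{a}$ and $\omega^{n+a}$ the roles reverse: they vanish on every $e_{k},e_{n+k}$ (again by the index ranges), while on $\hat{T}$ they extract the coefficients $\omega^{a}(\hat{T})=a^{a}$ and $\omega^{n+a}(\hat{T})=a^{n+a}$. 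Since $\omega^{a}|_{M}$ and $\omega^{n+a}|_{M}$ thus vanish on $\hat{\xi}$ and are determined by their value on $\hat{T}$, they are scalar multiples of $\hat{\theta}$, and it remains only to rewrite $a^{a},a^{n+a}$ in terms of the Levi metric.

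This last rewriting is where the factor $\tfrac{1}{2}$ enters and is the one point requiring care. The structure equation $d\omega^{2n+1}=2\sum_{\alpha}\omega^{\alpha}\wedge\omega^{n+\alpha}$ forces the Levi metric $\langle X,Y\rangle=d\theta(X,JY)$ to satisfy $\langle e_{A},e_{B}\rangle=2\delta_{AB}$ on the contact distribution, hence $\langle\nu,e_{a}\rangle=2a^{a}$ and $\langle\nu,e_{n+a}\rangle=2a^{n+a}$; this yields $\omega^{a}|_{M}=\tfrac{1}{2}\langle\nu,e_{a}\rangle\hat{\theta}$ and $\omega^{n+a}|_{M}=\tfrac{1}{2}\langle\nu,e_{n+a}\rangle\hat{\theta}$. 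The complex statements then follow by assembling $\theta^{\beta}=\omega^{\beta}+i\omega^{n+\beta}$: for $\beta=j$ one gets $\theta^{j}|_{M}=\hat{\omega}^{j}+i\hat{\omega}^{m+j}=\hat{\theta}^{j}$, and for $\beta=a$ one gets $\theta^{a}|_{M}=\tfrac{1}{2}\big(\langle\nu,e_{a}\rangle+i\langle\nu,e_{n+a}\rangle\big)\hat{\theta}$, which equals $\langle\nu,Z_{a}\rangle\hat{\theta}$ after expanding $Z_{a}=\tfrac{1}{2}(e_{a}-ie_{n+a})$ and using that the Hermitian extension of the Levi metric is conjugate-linear in its second slot. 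The final assertion is immediate: if $\nu=0$ then $\langle\nu,Z_{a}\rangle=0$, so $\theta^{a}|_{M}=0$. The main obstacle is thus not conceptual but bookkeeping — pinning down the normalization constant relating the structure equation, the adapted metric, and the Levi metric, and carrying it cleanly through the complex Hermitian extension so that no spurious factor survives in $\langle\nu,Z_{a}\rangle$.
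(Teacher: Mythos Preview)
Your proposal is correct and follows essentially the same approach as the paper: evaluate the ambient coframe forms on the tangent basis $\{\hat{e}_{k},\hat{e}_{m+k},\hat{T}\}$ of $M$ and match against the dual coframe $\{\hat{\omega}^{j},\hat{\omega}^{m+j},\hat{\theta}\}$. The paper carries out the pairing explicitly for $\omega^{j}$ and $\omega^{n+j}$ and then simply asserts the formulas for $\omega^{a}|_{M}$ and $\omega^{n+a}|_{M}$ with the words ``similar computation''; your treatment of the factor $\tfrac{1}{2}$ via the Levi-metric normalization $\langle e_{A},e_{B}\rangle=2\delta_{AB}$ coming from $d\theta=2\sum_{\alpha}\omega^{\alpha}\wedge\omega^{n+\alpha}$ actually supplies the detail the paper leaves implicit.
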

\begin{proof}
We compute
\begin{equation}
\begin{split}
\omega^{j}(\hat{T})&=\omega^{j}(T+\nu)=0\\
\omega^{j}(\hat{e}_{k})&=\omega^{j}(e_{k})=\delta_{jk}\\
\omega^{j}(\hat{e}_{m+k})&=\omega^{j}(e_{n+k})=0,
\end{split}
\end{equation}
and
\begin{equation}
\begin{split}
\omega^{n+j}(\hat{T})&=\omega^{n+j}(T+\nu)=0\\
\omega^{n+j}(\hat{e}_{k})&=\omega^{n+j}(e_{k})=0\\
\omega^{n+j}(\hat{e}_{m+k})&=\omega^{n+j}(e_{n+k})=\delta_{jk}.
\end{split}
\end{equation}
Therefore $\{\omega^{j}|_{M},\omega^{n+j}|_{M},\theta|_{M}\}$ is the dual frame of $\{\hat{e}_{j},\hat{e}_{m+j}, \hat{T}\}$.
Similar computation shows that 
\begin{equation}
\omega^{a}|_{M}=\frac{1}{2}\big<\nu,e_{a}\big>\hat{\theta},\ \ \omega^{n+a}|_{M}=\frac{1}{2}\big<\nu,e_{n+a}\big>\hat{\theta}.
\end{equation}
\end{proof}

\subsection{The normal connection} The normal connection $\nabla^{\perp}$ which is defined, on the normal complex bundle $\hat{\xi}^{\perp}\otimes C$ spanned by $Z_{a}$, by
\begin{equation}
\nabla^{\perp}Z_{a}=\theta_{a}{}^{b}\otimes Z_{b},
\end{equation}
which is the orthogonal projection of the pseudohermitian connection $\nabla Z_{a}$ onto the normal bundle.

\subsection{The tangential connection} The tangential connection $\nabla^{t}$ which is defined, on the complex bundle $\hat{\xi}_{C}$ spanned by $Z_{j}$, by
\begin{equation}
\nabla^{\perp}Z_{j}=\theta_{j}{}^{k}\otimes Z_{k},
\end{equation}
which is the orthogonal projection of the pseudohermitian connection $\nabla Z_{j}$ onto the contact bundle.

\begin{itemize}
\item Let $\hat{\theta}_{j}{}^{k}$ be the pseudohermitian connection forms with respect to the frame field $Z_{j}$. Then, from (\ref{incon2}), we have
\begin{equation}
\theta_{j}{}^{k}|_{M}=\hat{\theta}_{j}{}^{k}+i\delta_{jk}|\nu|^{2}\hat{\theta}.
\end{equation}
Therefore, in general, $\nabla^{t}\neq\nabla^{p.h.}$, the associated pseudohermitian connection of $M$.
\item If $M$ is vertical, then $\nabla^{t}=\nabla^{p.h.}$. 
\end{itemize}

\subsection{The second fundamental form} Define the bilinear form $II^a$ on $\hat{\xi}_{1,0}$ by
\begin{equation}
II^{a}(X,Y)=-\big<X,\nabla_{\overline{Y}}Z_{a}\big>.
\end{equation}

\begin{itemize}
\item We have $II^{a}=\theta^{j}\otimes\theta_{j}{}^{a}$. If $\widetilde{Z}_{a}=C_{a}{}^{b}Z_{b}$ is another normal frame field, then $\widetilde{II}^{a}=C_{\bar a}{}^{\bar b}II^{b}$.
\item $II^{a}\otimes Z_{a}$ is independent of the choice of the normal frame field $Z_{a}$.
\end{itemize}

{\bf The second fundamental form} $II$ for $M$ is defined, to be a map 
\begin{equation}
II:\hat{\xi}_{1,0}\times\hat{\xi}_{1,0}\rightarrow\hat{\xi}^{\perp}_{1,0}, 
\end{equation}
 by
\begin{equation}
II=II^{a}\otimes Z_{a}=\theta^{j}\otimes\theta_{j}{}^{a}\otimes Z_{a}.
\end{equation}

\section{General properties}\label{tgp}

\subsection{Pseudohermitian submanifolds with $\nu\equiv 0$}\label{pssuve} The canonical example is the Heisenberg subgroup $H_{m}$ which is defined by $H_{m}=\{(z,t)\in H_{n}\ |\ z_{a}=0\}$. Now we discuss the general properties of such kind of submanifolds. From Proposition \ref{reofcof}, we have 
\begin{equation}
\theta^{j}|_{M}=\hat{\theta}^{j},\ \ \textrm{and}\ \ \theta^{a}|_{M}=0.
\end{equation}
Therefore, we have the structure equations
\begin{equation}\label{sefor0}
\begin{split}
d\theta^{j}&=\theta^{k}\wedge\theta_{k}{}^{j}\\
0&=\theta^{k}\wedge\theta_{k}{}^{a}\ \ \ \ (\because \theta^{a}=0)\\
d\theta&=i\theta^{k}\wedge\theta^{\bar k}\\
d\theta_{j}{}^{l}&=\theta_{j}{}^{k}\wedge\theta_{k}{}^{l}+\theta_{j}{}^{c}\wedge\theta_{c}{}^{l}\\
d\theta_{j}{}^{a}&=\theta_{j}{}^{k}\wedge\theta_{k}{}^{a}+\theta_{j}{}^{c}\wedge\theta_{c}{}^{a}\\
d\theta_{a}{}^{j}&=\theta_{a}{}^{k}\wedge\theta_{k}{}^{j}+\theta_{a}{}^{c}\wedge\theta_{c}{}^{j}\\
d\theta_{a}{}^{b}&=\theta_{a}{}^{k}\wedge\theta_{k}{}^{b}+\theta_{a}{}^{c}\wedge\theta_{c}{}^{b},
\end{split}
\end{equation}

\begin{itemize}
\item From the first equation $d\theta^{j}=\theta^{k}\wedge\theta_{k}{}^{j}$ of (\ref{sefor0}), together with $\theta_{k}{}^{j}+\theta_{\bar j}{}^{\bar k}$, we have 
\begin{equation}\hat{\tau}^{j}\equiv 0,\ \hat{\theta}_{k}{}^{j}=\theta_{k}{}^{j},
\end{equation}
where $\hat{\tau}^{j},\ \hat{\theta}_{k}{}^{j}$ are the pseudohermitian torsion forms and connection forms with respect to the admissible coframe $\{\hat{\theta}^{j}\}$.
\item From the second equation $0=\theta^{k}\wedge\theta_{k}{}^{a}$ of (\ref{sefor0}), together with Cartan lemma, we have
\begin{equation}
\theta_{j}{}^{a}=h^{a}_{jk}\theta^{k},
\end{equation}
for some functions $h^{a}_{jk}$ satisfying $h^{a}_{jk}=h^{a}_{kj}$. Therefore
\begin{equation}\label{secfun2}
\begin{split}
II&=\theta^{j}\otimes\theta_{j}{}^{a}\otimes Z_{a}\\
&=h^{a}_{jk}\theta^{j}\otimes\theta^{k}\otimes Z_{a},
\end{split}
\end{equation}
\item The fourth equation of (\ref{sefor0})
\begin{equation}
d\theta_{j}{}^{l}=\theta_{j}{}^{k}\wedge\theta_{k}{}^{l}+\theta_{j}{}^{c}\wedge\theta_{c}{}^{l}
\end{equation}
is called the {\bf Gauss-like equation}. Since $\theta_{j}{}^{k}=\hat{\theta}_{j}{}^{k}$ and $\theta_{j}{}^{c}=h^{c}_{jk}\theta^{k}$, it is easy to see that the Gauss-like equation is equivalent to 
\begin{equation}
R_{j\bar{l}\zeta\bar{\eta}}=-\sum_{c}h^{c}_{j\zeta}h^{\bar c}_{\bar{l}\bar{\eta}},
\end{equation}
which implies $R_{\zeta\bar{\eta}}=-\sum_{c=m+1}^{n}h^{c}_{k\zeta}h^{\bar c}_{\bar{k}\bar{\eta}}$, and hence the Webster-Ricci tensor is {\bf non-positive}.
\item The fifth equation of (\ref{sefor0}) 
\begin{equation}
d\theta_{j}{}^{a}=\theta_{j}{}^{k}\wedge\theta_{k}{}^{a}+\theta_{j}{}^{c}\wedge\theta_{c}{}^{a}
\end{equation}
is equivalent to the sixth equation of (\ref{sefor0}) 
\begin{equation}
d\theta_{a}{}^{j}=\theta_{a}{}^{k}\wedge\theta_{k}{}^{j}+\theta_{a}{}^{c}\wedge\theta_{c}{}^{j}.
\end{equation}
Either one is called the {\bf Codazzi-like equation}.
\item The last equation of (\ref{sefor0}) 
\begin{equation}
d\theta_{a}{}^{b}=\theta_{a}{}^{k}\wedge\theta_{k}{}^{b}+\theta_{a}{}^{c}\wedge\theta_{c}{}^{b}
\end{equation}
is called the {\bf Ricci-like equation}.
\end{itemize}

\subsection{Pseudohermitian submanifolds with $\nu$ nowhere zero}\label{stspsp} The canonical example is the standard sphere $S^{2n-1}\subset C^{n}\subset H_{n}=C^{n}\times R,\ n\geq 2$. It is defined by 
\begin{equation*}
S^{2n-1}(r)=\left\{(z_{1},\cdots,z_{n},0)\in H_{n}\ |\ \sum_{\beta=1}^{n}z_{\beta}z_{\bar\beta}=r^{2}\right\}.
\end{equation*}
Let $L_{p}$ be a left translation, we compute the image of $(z,0)\in S^{2n-1}(r)$,
\begin{equation*}
L_{p}(z,0)=p+x_{\beta}\mathring{e}_{\beta}(p)+y_{\beta}\mathring{e}_{n+\beta}(p),
\end{equation*}
where $z_{\beta}=x_{\beta}+iy_{\beta}$, and hence the image of $S^{2n-1}(r)$ under $L_{p}$ is
\begin{equation}
L_{p}\left(S^{2n-1}(r)\right)=\{q\in H_{n}\  |\ q-p\in\xi(p),\ \textrm{and}\ |q-p|=r\},
\end{equation}
where the norm $|\cdot|$ is measured by the levi metric. Next, there are two pseudohermitian structures induced on $S^{2n-1}$, one is from the Heisenberg group $H_{n}$, denoted by $(\hat{J},\hat{\theta})$, and the other is from $C^{n}$. It is easy to see that these two induced pseudohermitian structures coincide on $S^{2n-1}(r)$ as the following specifies. Let $u=\left(\sum_{\beta=1}^{n}z_{\beta}z_{\bar\beta}\right)-r^{2}$ be the defining function. We have
\begin{equation}\label{coform}
\hat{\theta}=\Theta|_{S^{2n-1}}=x_{\beta}dy_{\beta}-y_{\beta}dx_{\beta}=\frac{i(\bar{\partial}u-\partial u)}{2},
\end{equation}
hence
\begin{equation}
\hat{\xi}=\ker{\hat{\theta}}=TS^{2n-1}\cap J_{C^{n}}(TS^{2n-1})\subset TC^{n},
\end{equation}
where $J_{C^{n}}$ is the standard complex structure on $C^{n}$. 
\begin{lem}
Let $p=(z,t)\in S^{2n-1}$. If a vector $X=a_{\beta}\mathring{e}_{\beta}+a_{n+\beta}\mathring{e}_{n+\beta}\in\hat{\xi}(p)$, then $a_{\beta}y_{\beta}-a_{n+\beta}x_{\beta}=0$, where $z_{\beta}=x_{\beta}+iy_{\beta}$. In addition, we have
\begin{equation}\label{exofve}
X=a_{\beta}\mathring{e}_{\beta}+a_{n+\beta}\mathring{e}_{n+\beta}=a_{\beta}\frac{\partial}{\partial x_{\beta}}+a_{n+\beta}\frac{\partial}{\partial y_{\beta}},
\end{equation}
for all $X\in\hat{\xi}$.
\end{lem}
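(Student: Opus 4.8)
The plan is to prove both assertions by direct computation from the definitions, exploiting the fact that on the sphere $S^{2n-1}$ (with $t=0$) the Reeb direction is genuinely transverse to the submanifold, so the $t$-components of the ambient frame vectors can be eliminated on $\hat\xi$. First I would record the defining equation in real coordinates: writing $z_\beta=x_\beta+iy_\beta$, the condition $\sum z_\beta z_{\bar\beta}=r^2$ reads $\sum_\beta(x_\beta^2+y_\beta^2)=r^2$, so its differential is $\sum_\beta(x_\beta\,dx_\beta+y_\beta\,dy_\beta)=0$, which characterizes $T_pS^{2n-1}\subset TC^n$. Simultaneously, from \eqref{coform} the contact form restricts to $\hat\theta=\sum_\beta(x_\beta\,dy_\beta-y_\beta\,dx_\beta)$, so $\hat\xi=\ker\hat\theta$ is cut out inside $TS^{2n-1}$ by the extra condition $\sum_\beta(x_\beta\,dy_\beta-y_\beta\,dx_\beta)=0$.

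Next I would take a tangent vector to the ambient Heisenberg group of the form $X=a_\beta\mathring e_\beta+a_{n+\beta}\mathring e_{n+\beta}$ and impose membership in $\hat\xi(p)$. Using the explicit formulas $\mathring e_\beta=\partial_{x_\beta}+y_\beta\partial_t$ and $\mathring e_{n+\beta}=\partial_{y_\beta}+x_\beta\partial_t$, I compute the $t$-component of $X$ to be $\sum_\beta(a_\beta y_\beta+a_{n+\beta}x_\beta)$ — wait, I must be careful with the signs coming from the definitions, so I would recompute $\hat\theta(X)=\Theta(X)$ directly. Evaluating $\Theta=dt+\sum_\beta(x_\beta\,dy_\beta-y_\beta\,dx_\beta)$ on $X$ and using $X(t)=\sum_\beta(a_\beta y_\beta+a_{n+\beta}x_\beta)$, $X(x_\beta)=a_\beta$, $X(y_\beta)=a_{n+\beta}$ should, after cancellation of the $dt$ contribution against one of the other two terms, collapse to the stated relation $\sum_\beta(a_\beta y_\beta-a_{n+\beta}x_\beta)=0$. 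This single scalar identity is precisely the claimed condition.

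Given that relation, the second assertion \eqref{exofve} is immediate: by definition $a_\beta\mathring e_\beta+a_{n+\beta}\mathring e_{n+\beta}=a_\beta\partial_{x_\beta}+a_{n+\beta}\partial_{y_\beta}+\big(\sum_\beta(a_\beta y_\beta+a_{n+\beta}x_\beta)\big)\partial_t$, and I would show the coefficient of $\partial_t$ vanishes. The key point is that $\hat\theta(X)=0$ means the full $t$-component including the contact-form correction is zero; combining $X(t)=\sum_\beta(a_\beta y_\beta+a_{n+\beta}x_\beta)$ with the constraint forces this sum to vanish, so the $\partial_t$ term drops out and $X$ lies in the horizontal coordinate plane $\operatorname{span}\{\partial_{x_\beta},\partial_{y_\beta}\}$ as claimed.

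The main obstacle is purely bookkeeping: I must reconcile the two sign conventions appearing in the paper (the $+y_\beta\partial_t,+x_\beta\partial_t$ in the frame versus the $x_\beta\,dy_\beta-y_\beta\,dx_\beta$ in $\Theta$), and verify that the raw $t$-component $\sum_\beta(a_\beta y_\beta+a_{n+\beta}x_\beta)$ together with the $\hat\theta$-constraint really conspire to give the antisymmetric combination $\sum_\beta(a_\beta y_\beta-a_{n+\beta}x_\beta)=0$ in the lemma statement while simultaneously making the symmetric $\partial_t$-coefficient vanish. I expect these to be consistent because on $S^{2n-1}$ the two conditions (tangency and horizontality) together with $t=0$ pin down exactly the CR tangent space, but checking the signs carefully is the one genuinely error-prone step; once that arithmetic is confirmed, both displayed conclusions follow without any further input.
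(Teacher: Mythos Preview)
Your plan has a genuine gap. The vector $X=a_\beta\mathring e_\beta+a_{n+\beta}\mathring e_{n+\beta}$ is by construction a combination of the contact frame vectors, so it already lies in $\xi=\ker\Theta$; evaluating $\Theta(X)$ therefore gives $0$ identically and imposes no constraint on the coefficients $a_\beta,a_{n+\beta}$. Your proposed step of ``evaluating $\Theta$ on $X$'' and hoping it ``collapses to the stated relation'' yields nothing, and likewise the condition $\hat\theta(X)=0$ cannot be the source of the identity $a_\beta y_\beta-a_{n+\beta}x_\beta=0$.

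The paper's argument is different and much shorter. Just before the lemma it is established that $\hat\xi=TS^{2n-1}\cap J_{C^n}(TS^{2n-1})\subset TC^n$; in particular every $X\in\hat\xi$ has vanishing $\partial_t$-component. One then simply expands $X$ in coordinate vector fields using the definitions of $\mathring e_\beta,\mathring e_{n+\beta}$, reads off the coefficient of $\partial_t$, and sets it equal to zero. That single observation gives both the scalar identity and the displayed expression \eqref{exofve} at once. The relevant constraint is thus tangency to the slice $\{t=0\}$ (equivalently membership in $TC^n$), not the contact condition.

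Your sign confusion is understandable: the displayed formula $\mathring e_{n+\beta}=\partial_{y_\beta}+x_\beta\partial_t$ in Section~1.1 is inconsistent with $\Theta=dt+x_\beta\,dy_\beta-y_\beta\,dx_\beta$ (it does not lie in $\ker\Theta$). The intended definition is $\mathring e_{n+\beta}=\partial_{y_\beta}-x_\beta\partial_t$, and with this sign the $\partial_t$-coefficient of $X$ is exactly the antisymmetric combination $a_\beta y_\beta-a_{n+\beta}x_\beta$ appearing in the lemma and in the paper's proof. Once you adopt the corrected sign, there is only one quantity in play, not a ``symmetric'' and an ``antisymmetric'' one to be reconciled.
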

\begin{proof} We compute
\begin{equation}
\begin{split}
X&=a_{\beta}\mathring{e}_{\beta}+a_{n+\beta}\mathring{e}_{n+\beta}\\
&=a_{\beta}\frac{\partial}{\partial x_{\beta}}+a_{n+\beta}\frac{\partial}{\partial y_{\beta}}+(a_{\beta}y_{\beta}-a_{n+\beta}x_{\beta})\frac{\partial}{\partial t}.
\end{split}
\end{equation}
Since $\hat{\xi}\subset TC^{n}$, we get $a_{\beta}y_{\beta}-a_{n+\beta}x_{\beta}=0$.
\end{proof}
For all $X\in\hat{\xi}$, 
\begin{equation}
\begin{split}
\hat{J}(X)&=J(a_{\beta}\mathring{e}_{\beta}+a_{n+\beta}\mathring{e}_{n+\beta})=a_{\beta}\mathring{e}_{n+\beta}-a_{n+\beta}\mathring{e}_{\beta}\\
&=a_{\beta}\frac{\partial}{\partial y_{\beta}}-a_{n+\beta}\frac{\partial}{\partial x_{\beta}}=J_{C^{n}}\left(a_{\beta}\frac{\partial}{\partial x_{\beta}}+a_{n+\beta}\frac{\partial}{\partial y_{\beta}}\right)\\
&=J_{C^{n}}(X),
\end{split}
\end{equation}
which shows that $\hat{J}$ is also induced from $J_{C^{n}}$.
On the other hand, from (\ref{coform}), we have 
\begin{equation}
\hat{T}=\frac{i\left(z_{\beta}\frac{\partial}{\partial z_{\beta}}-z_{\bar\beta}\frac{\partial}{\partial z_{\bar \beta}}\right)}{r^{2}}=\frac{\partial}{\partial t}+\nu,
\end{equation}
which implies that
\begin{equation}
\begin{split}
\nu&=\frac{i\left(z_{\beta}\frac{\partial}{\partial z_{\beta}}-z_{\bar\beta}\frac{\partial}{\partial z_{\bar \beta}}\right)}{r^{2}}-\frac{\partial}{\partial t}=\frac{i\left(x_{\beta}\frac{\partial}{\partial y_{\beta}}-y_{\bar\beta}\frac{\partial}{\partial x_{\bar \beta}}\right)}{r^{2}}-\frac{\partial}{\partial t}\\
&=\frac{x_{\beta}\mathring{e}_{n+\beta}-y_{\beta}\mathring{e}_{\beta}}{r^{2}}.
\end{split}
\end{equation}
This shows that the standard sphere $S^{2n-1}(r)$ is completely non-vertical.

\section{The uniqueness theorem}\label{tut}
In this section, we are going to prove Theorem A. Let $M$ and $N$ be two pseudohermitian submanifolds with the same CR dimension $m$. Suppose $\Phi$ is a Heisenberg rigid motion such that $\Phi(M)=N$ and denote $\varphi=\Phi|_{M}$.

Let $\{Z_{\beta}\}$ be a frame field over $M$, and suppose $\widetilde{Z}_{\beta}=\Phi_{*}Z_{\beta}$, the set $\{\widetilde{Z}_{\beta}\}$ is a frame field over $N$. Suppose $\{\theta^{\beta},\Theta\}$ and $\{\widetilde{\theta}^{\beta},\Theta\}$ are the dual frame fields of $\{Z_{\beta},T\}$ and $\{\widetilde{Z}_{\beta},T\}$, respectively. Then we have
\begin{equation}
\theta^{\beta}=\Phi^{*}\widetilde{\theta}^{\beta},\ \ \Theta=\Phi^{*}\Theta.
\end{equation}
In particular, we have
\begin{equation}\label{prepseu}
\theta^{j}=\varphi^{*}\widetilde{\theta}^{j},\ \ \hat{\theta}=\varphi^{*}\widetilde{\theta},
\end{equation}
where $\hat{\theta}$ and $\widetilde{\theta}$ are the induced contact form on $M$ and $N$, respectively. (\ref{prepseu}) implies that $\varphi$ preserves the induced pseudohermitian structures. 
 
 From the structure equation on $H_{n}$, we compute
 \begin{equation}
 \begin{split}
 d\theta^{\beta}&=\theta^{\gamma}\wedge\theta_{\gamma}{}^{\beta}=\left(\Phi^{*}\widetilde{\theta}^{\gamma}\right)\otimes\theta_{\gamma}{}^{\beta}\\
\Vert\ \ & \\
d(\Phi^{*}\widetilde{\theta}^{\beta})&= \Phi^{*}(d\widetilde{\theta}^{\beta}),
 \end{split}
 \end{equation}
which is equivalent to 
\begin{equation}
d\widetilde{\theta}^{\beta}=\widetilde{\theta}^{\gamma}\otimes(\Phi^{-1})^{*}\theta_{\gamma}{}^{\beta}.
\end{equation}
Together with
\begin{equation}
(\Phi^{-1})^{*}\theta_{\gamma}{}^{\beta}+(\Phi^{-1})^{*}\theta_{\bar\beta}{}^{\bar\gamma}=(\Phi^{-1})^{*}(\theta_{\gamma}{}^{\beta}+\theta_{\bar\beta}{}^{\bar\gamma})=0,
\end{equation}
and by the uniqueness, we get 
\begin{equation}
\theta_{\gamma}{}^{\beta}=\Phi^{*}\widetilde{\theta}_{\gamma}{}^{\beta}.
\end{equation}
In particular, we have
\begin{equation}
\theta_{\gamma}{}^{\beta}=\varphi^{*}\widetilde{\theta}_{\gamma}{}^{\beta}, 
\end{equation}
and hence 
\begin{equation}
\big<II,V\big>=\varphi^{*}\big<\widetilde{II},\Phi_{*}V\big>,
\end{equation}
for all $V\in\hat{\xi}^{\perp}_{C}$.

The defferential $\Phi_{*}$ defines a vector bundle isomorphism
\begin{equation}
\begin{array}{ccc}
\hat{\xi}^{\perp}_{1,0}&\longrightarrow&\widetilde{\xi}^{\perp}_{1,0}\\
\downarrow& &\downarrow\\
M&\longrightarrow&N,
\end{array}
\end{equation}
which preserving the hermitian structures induced from the levi-metric and cover $\varphi$, such that $\Phi_{*}$ preserves the normal connections, i.e.,
\begin{equation}
\Phi_{*}(\nabla^{\perp}_{X}Z_{a})=\widetilde{\nabla}^{\perp}_{\varphi_{*}X}(\Phi_{*}Z_{a}),
\end{equation}
for all $X\in TM$, where $\nabla^{\perp}$ and $\widetilde{\nabla}^{\perp}$ are the induced normal connections on $M$ and $N$, respectively. Finally, it is easy to see that $\Phi_{*}\nu=\widetilde{\nu}$.

\begin{de}\label {defof1}
Suppose that $M$ and $N$ are two pseudohermitian submanifolds of $H_{n}$ with the same CR dimension $m$. We say that $M$ and $N$ have {\bf the same (induced) pseudohermitian structures, the second fundamental forms, the normal connections and the fundamental vector fields} if there exists a vector bundle isomorphism $F:\hat{\xi}^{\perp}_{1,0}\rightarrow\widetilde{\xi}^{\perp}_{1,0}$, which preserves the induced hermitian structures and covers a map $\varphi:M\rightarrow N$, such that 
\begin{itemize}
\item $F$ preserves the induced pseudohermitian structures: $\varphi_{*}\circ \hat{J}=\widetilde{J}\circ\varphi_{*}$;\ and\ $\varphi^{*}\widetilde{\theta}=\hat{\theta}$;
\item $F$ preserves the second fundamental forms: $\big<II,V\big>_{\hat{\xi}^{\perp}_{1,0}}=\varphi^{*}\big<\widetilde{II},FV\big>_{\widetilde{\xi}^{\perp}_{1,0}}$,\ for all $V\in\hat{\xi}^{\perp}_{1,0}$.
\item $F$ preserves the normal connections: $F(\nabla^{\perp}_{X}V)=\widetilde{\nabla}^{\perp}_{\varphi_{*}X}(FV)$,\ for all $X\in TM,\ V\in\hat{\xi}^{\perp}_{1,0}$.
\item $F$ preserves the fundamental vector field: $F\nu=\widetilde{\nu}$.
\end{itemize}
\end{de}
Therefore we conclude that if $M$ is congruent with $N$, then they have the same such four invariants. Conversely, we have 
\begin{thm}\label{mainthm01}
Let $(M,\hat{J},\hat{\theta})$ and $(N,\widetilde{J},\widetilde{\theta})$ be two simply connected pseudohermitian submanifolds of $H_{n}$ with CR dimension $m$. Suppose that they have the same (induced) pseudohermitian structures, the second fundamental forms, the normal connections and the fundamental vector fields. Then they differ by a Heisenberg rigid motion.
\end{thm}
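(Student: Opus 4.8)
The plan is to reduce the statement to the uniqueness theorem for Darboux derivatives, Theorem \ref{tutthm}. Let $\varphi\colon M\to N$ be the diffeomorphism underlying the bundle isomorphism $F$ of Definition \ref{defof1}. First I would fix compatible Darboux frames: choose a Darboux frame $f=(p;Z_{\beta},T)$ over $M$ with $Z_{j}$ a unitary frame of $\hat{\xi}_{1,0}$ and $Z_{a}$ a unitary frame of $\hat{\xi}^{\perp}_{1,0}$, and transport it to $N$ by setting $\widetilde{Z}_{j}=\varphi_{*}Z_{j}$ and $\widetilde{Z}_{a}=FZ_{a}$. Since $\varphi$ preserves $\hat{J}$ and the Levi metric and $F$ is an isometric complex bundle map, $\widetilde{f}=(\,\cdot\,;\widetilde{Z}_{\beta},T)$ is a genuine Darboux frame over $N$. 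Because the Darboux derivative of a composition satisfies $\omega_{\widetilde{f}\circ\varphi}=\varphi^{*}\omega_{\widetilde{f}}$, the whole theorem follows once I establish the single identity $\omega_{f}=\varphi^{*}\omega_{\widetilde{f}}$: since $M$ is simply connected, Theorem \ref{tutthm} then produces $g\in PSH(n)$ with $\widetilde{f}\circ\varphi=g\cdot f$, and reading off the base-point column of this matrix identity gives $\varphi(x)=\Phi_{g}(x)$, so $\Phi_{g}$ is the desired Heisenberg rigid motion with $\Phi_{g}(M)=N$.

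So the work is to check that every entry of the $psh(n)$-valued form $\omega_{f}$ agrees with the $\varphi$-pullback of the corresponding entry of $\omega_{\widetilde{f}}$, organized by the block structure of (\ref{cvdade}). The coframe entries are immediate from Proposition \ref{reofcof}: the tangential forms $\theta^{j}|_{M}=\hat{\theta}^{j}$ and the contact form $\theta|_{M}=\hat{\theta}$ match because $\varphi$ preserves the induced pseudohermitian structure, while the normal forms $\theta^{a}|_{M}=\langle\nu,Z_{a}\rangle\hat{\theta}$ match because $F\nu=\widetilde{\nu}$ and $F$ is isometric, so $\langle\nu,Z_{a}\rangle=\langle\widetilde{\nu},\widetilde{Z}_{a}\rangle\circ\varphi$. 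The normal connection entries $\theta_{a}{}^{b}$ match by hypothesis, as these are exactly the connection forms of $\nabla^{\perp}$ and $F$ intertwines $\nabla^{\perp}$ and $\widetilde{\nabla}^{\perp}$. The tangential connection entries $\theta_{j}{}^{k}$ match because of the formula $\theta_{j}{}^{k}|_{M}=\hat{\theta}_{j}{}^{k}+i\delta_{jk}|\nu|^{2}\hat{\theta}$: the Tanaka--Webster form $\hat{\theta}_{j}{}^{k}$ is canonically determined by $(\hat{J},\hat{\theta})$ and hence transported correctly by $\varphi$, while $|\nu|^{2}$ is matched by $F\nu=\widetilde{\nu}$.

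The delicate block is $\theta_{j}{}^{a}$, with $\theta_{a}{}^{j}=-\overline{\theta_{j}{}^{a}}$ then following from the skew relation $\theta_{\gamma}{}^{\beta}+\theta_{\bar{\beta}}{}^{\bar{\gamma}}=0$. The second fundamental form only sees the $(1,0)$ part: writing $\theta_{j}{}^{a}|_{M}$ in the coframe $\{\theta^{k},\theta^{\bar{k}},\hat{\theta}\}$, the hypothesis $\langle II,V\rangle=\varphi^{*}\langle\widetilde{II},FV\rangle$ pins down exactly the $\theta^{k}$-coefficients $h^{a}_{jk}$ but leaves the $\theta^{\bar{k}}$- and $\hat{\theta}$-coefficients unaddressed. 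The hard part will be to recover these remaining coefficients, and here I would feed the already-matched data back through the structure equations. Setting $\psi_{\gamma}{}^{\beta}=(\theta_{\gamma}{}^{\beta}-\varphi^{*}\widetilde{\theta}_{\gamma}{}^{\beta})|_{M}$, the matched coframe together with $d\theta^{\beta}=\theta^{\gamma}\wedge\theta_{\gamma}{}^{\beta}$ yields $\sum_{\gamma}\theta^{\gamma}\wedge\psi_{\gamma}{}^{\beta}=0$ with $\psi$ skew; on $H_{n}$ this would force $\psi\equiv 0$, and the only obstruction over $M$ is the degeneracy $\theta^{a}|_{M}=\langle\nu,Z_{a}\rangle\hat{\theta}$. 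Differentiating the matched identity $\theta^{a}|_{M}=\varphi^{*}\widetilde{\theta}^{a}$ and using $d\theta^{a}=\theta^{k}\wedge\theta_{k}{}^{a}+\theta^{b}\wedge\theta_{b}{}^{a}$ with the known $d\hat{\theta}$, the matched normal forms $\theta^{b}|_{M}$, and the matched normal connection $\theta_{b}{}^{a}$, absorbs precisely this degeneracy and forces the outstanding coefficients of $\theta_{j}{}^{a}$ to coincide with their tilde-counterparts. Once $\psi\equiv 0$, every entry matches, $\omega_{f}=\varphi^{*}\omega_{\widetilde{f}}$, and Theorem \ref{tutthm} completes the argument.
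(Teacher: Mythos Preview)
Your proposal is correct and follows essentially the same approach as the paper: both reduce the theorem to the uniqueness theorem for Darboux derivatives (Theorem~\ref{tutthm}) by showing that every entry of $\omega_f$ is determined by the four invariants, with the key step being the structure equation $d\theta^{a}=\theta^{k}\wedge\theta_{k}{}^{a}+\theta^{b}\wedge\theta_{b}{}^{a}$ to pin down $\theta_{j}{}^{a}$. The only organizational difference is that the paper carries this computation through to the explicit formulas~(\ref{incon2}) and then observes they depend only on $(\hat J,\hat\theta)$, $II$, $\nabla^{\perp}$, and $\nu$, whereas you compare the two Darboux derivatives directly and let Cartan's lemma kill the difference $\psi_{j}{}^{a}$; these are equivalent packagings of the same calculation.
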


\begin{cor}
If $M$ and $N$ are {\bf vertical}, then the (induced) pseudohermitian structures, the second fundamental forms and the normal connections constitute a complete set of invariants.
\end{cor}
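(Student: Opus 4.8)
The plan is to reduce the statement to the uniqueness theorem for the Darboux derivative, Theorem \ref{tutthm}: I will manufacture two maps $M\to PSH(n)$ out of the two submanifolds whose Darboux derivatives coincide, and then read off the rigid motion that Theorem \ref{tutthm} produces. Concretely, over a (simply connected) open set I first choose a Darboux frame $f(p)=(p;Z_\beta,T)$ for $M$, with $Z_j$ spanning $\hat\xi_{1,0}$ and $Z_a$ spanning $\hat\xi^\perp_{1,0}$. Using the hypotheses, I transport this frame to $N$: the tangential vectors go over by $\varphi_*$, which maps $\hat\xi_{1,0}$ to $\widetilde\xi_{1,0}$ because $F$ covers $\varphi$ and $\varphi$ is a CR map ($\varphi_*\circ\hat J=\widetilde J\circ\varphi_*$), and the normal vectors go over by the bundle isomorphism $F$, which maps $\hat\xi^\perp_{1,0}$ to $\widetilde\xi^\perp_{1,0}$. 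Because $F$ preserves the induced hermitian structures and $\varphi$ preserves the Levi metric on the contact distribution, the resulting frame $\widetilde Z_\beta=(\varphi_*Z_j,\,FZ_a)$ is again an admissible unitary frame respecting $J$ (the common Reeb vector $T=\partial/\partial t$ is shared), hence defines a Darboux frame $\tilde f$ for $N$. Setting $f_1:=f$ and $f_2:=\tilde f\circ\varphi$, I obtain two maps into $PSH(n)$ with $\omega_{f_2}=\varphi^*\omega_{\tilde f}$.

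The heart of the argument is to check that $\omega_{f_1}=\omega_{f_2}$, which amounts to matching, block by block, the entries of the $psh(n)$-valued form $\omega_F$ in \eqref{cvdade}; each block is controlled by exactly one of the four invariants of Definition \ref{defof1}. For the coframe, $\theta^j|_M=\hat\theta^j$ and $\theta|_M=\hat\theta$ by Proposition \ref{reofcof}, and these match since $F$ preserves the pseudohermitian structure ($\varphi^*\widetilde\theta=\hat\theta$); the normal coframe entries $\theta^a|_M=\langle\nu,Z_a\rangle\hat\theta$ match because $F$ preserves $\nu$ together with the contact form. For the connection block $\theta_\gamma{}^\beta$: the tangential part $\theta_j{}^k|_M=\hat\theta_j{}^k+i\delta_{jk}|\nu|^2\hat\theta$ matches because the pseudohermitian connection forms $\hat\theta_j{}^k$ are canonically determined by the preserved pseudohermitian structure and $|\nu|$ is preserved by $F$; the off-diagonal part $\theta_j{}^a$, being $II=\theta^j\otimes\theta_j{}^a\otimes Z_a$, matches by preservation of $II$ together with the already-matched coframe (the conjugate block $\theta_a{}^j$ then matches via the skew-hermitian relation $\theta_\gamma{}^\beta+\theta_{\bar\beta}{}^{\bar\gamma}=0$); and the normal part $\theta_a{}^b$ matches directly because $F$ preserves the normal connection $\nabla^\perp Z_a=\theta_a{}^b\otimes Z_b$. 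With every block matched, $\omega_{f_1}=\omega_{f_2}$.

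Theorem \ref{tutthm} then yields $g\in PSH(n)$ with $f_2=g\cdot f_1$. Comparing base points (the first column of the matrix representation \eqref{grre}) gives $\varphi(p)=\Phi_g(p)$ for the rigid motion $\Phi_g$ attached to $g$, so $N=\Phi_g(M)$ and the two submanifolds differ by a Heisenberg rigid motion, as claimed.

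The main obstacle is twofold. The delicate block is the tangential connection $\theta_j{}^k$: it is \emph{not} the tangential connection $\nabla^t$ that one matches but the ambient connection form restricted to $M$, and the two differ by the correction $i\delta_{jk}|\nu|^2\hat\theta$; the argument must therefore invoke invariance of the full pseudohermitian structure (which fixes $\hat\theta_j{}^k$) rather than of $\nabla^t$ alone, and separately the invariance of $|\nu|$ coming from preservation of $\nu$. The second issue is globalization: a Darboux frame exists only locally, so the identity $\omega_{f_1}=\omega_{f_2}$ and the resulting local rigid motions must be patched. This is where simple connectedness enters — on overlaps two symmetries inducing the same $\varphi$ must coincide, since a symmetry in $PSH(n)$ is determined by a single frame, and connectedness then forces one global $g$, Theorem \ref{tutthm} being applicable on each simply connected piece.
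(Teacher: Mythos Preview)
Your argument is essentially a re-derivation of Theorem \ref{mainthm01} rather than a proof of the corollary, and in the process you have introduced a gap: you repeatedly invoke ``$F$ preserves $\nu$'' and ``$|\nu|$ is preserved by $F$'' to match the blocks $\theta^a$, $\theta_j{}^k$, and (implicitly) $\theta_j{}^a$. But preservation of the fundamental vector field is \emph{not} among the hypotheses of the corollary --- only the three invariants (pseudohermitian structure, second fundamental form, normal connection) are assumed to agree. As written, your proof establishes Theorem \ref{mainthm01}, not the corollary.

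What rescues the argument is precisely the vertical hypothesis, which you never use. Verticality means $\nu\equiv 0$ on $M$ and $\widetilde\nu\equiv 0$ on $N$. Then $\theta^a|_M=\langle\nu,Z_a\rangle\hat\theta=0$, the correction term in $\theta_j{}^k|_M=\hat\theta_j{}^k+i\delta_{jk}|\nu|^2\hat\theta$ vanishes, and by \eqref{incon2} the off-diagonal block collapses to $\theta_j{}^a|_M=h^a_{jk}\hat\theta^k$, which is genuinely determined by $II$ alone. So every place where you appealed to preservation of $\nu$ is in fact handled by $\nu=0$ on both sides. This is exactly how the paper intends the corollary to follow: since $\nu=\widetilde\nu=0$, the fourth condition $F\nu=\widetilde\nu$ in Definition \ref{defof1} is automatic, and Theorem \ref{mainthm01} applies directly with only the first three hypotheses. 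A one-line remark to this effect is the whole proof; your moving-frame computation is correct once you insert the observation $\nu=0$, but it duplicates the work already done for Theorem \ref{mainthm01}.
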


\subsection{The proof of Theorem \ref{mainthm01}}
Let $(M,\hat{J},\hat{\theta})$ be a pseudohermitian submanifold of $H_n$. Recall that we always choose the frame field $\{Z_{\beta},T\}$ over $M$ such that $Z_{j}\in\hat{\xi}_{1,0}$ and $Z_{a}\in\hat{\xi}^{\perp}_{1,0}$. This is a Darboux frame. Let $\{\theta^{\beta},\theta\}$ be the dual of $\{Z_{\beta},T\}$.
We would like to show that the restrictions of $\theta^{\beta}$ and $\theta_{\beta}{}^{\gamma}$ to $M$ are expressed as the following:
\begin{equation}\label{incon2}
\begin{split}
\theta^{j}|_{M}&=\hat{\theta}^{j},\\
\theta^{a}|_{M}&=\big<\nu,Z_{a}\big>\hat{\theta},\\
\theta|_{M}&=\hat{\theta},\\
\theta_{j}{}^{k}|_{M}&=\hat{\theta}_{j}{}^{k}+i\delta_{jk}|\nu|^{2}\hat{\theta},\\
\theta_{j}{}^{a}|_{M}&=h^{a}_{jk}\hat{\theta}^{k}+i\delta_{jk}\big<\nu,Z_a\big>\hat{\theta}^{\bar k}+\big<\nabla^{\perp}_{\hat{Z}_j}\nu,Z_a\big>\hat{\theta};
\end{split}
 \end{equation}
and here $h^{a}_{jk}=II^{a}(\hat{Z}_{j},\hat{Z}_{k})$, and $\theta_{a}{}^{b}|_{M}$ is the normal connection forms w.r.t. $\{Z_{a}\}$. This shows that the Darboux derivative of the Draboux frame is completely determined by the induced pseudohermitian structures, the second fundamental forms, the normal connections and the fundamental vector fields.

Now we prove (\ref{incon2}).
\begin{equation}\label{incon3}
\begin{split}
d\theta^{a}&=\theta^{j}\wedge\theta_{j}{}^{a}+\theta^{b}\wedge\theta_{b}{}^{a}\\
&=\hat{\theta}^{j}\wedge\theta_{j}{}^{a}+\big<\nu,Z_{b}\big>\hat{\theta}\wedge\theta_{b}{}^{a}.
\end{split}
\end{equation}
On the other hand,
\begin{equation}\label{incon4}
\begin{split}
d\theta^{a}&=d\left(\big<\nu,Z_{a}\big>\hat{\theta}\right)=d\big<\nu,Z_{a}\big>\wedge\hat{\theta}+\big<\nu,Z_{a}\big>d\hat{\theta}\\
&=\left(\big<\nabla\nu,Z_{a}\big>+\big<\nu,\nabla_{\bar{\cdot}}Z_{a}\big>\right)\wedge\hat{\theta}+\big<\nu,Z_{a}\big>d\hat{\theta}\\
&=\left(\big<\nabla^{\perp}\nu,Z_{a}\big>+\big<\nu,\nabla^{\perp}_{\bar{\cdot}}Z_{a}\big>\right)\wedge\hat{\theta}+\big<\nu,Z_{a}\big>d\hat{\theta},
\end{split}
\end{equation}
and 
\begin{equation}\label{incon5}
\big<\nu,\nabla^{\perp}_{\bar{\cdot}}Z_{a}\big>=\big<\nu,\theta_{a}{}^{b}(\bar{\cdot})\otimes Z_{b}\big>=\big<\nu,Z_{b}\big>\theta_{\bar a}{}^{\bar b}
\end{equation}
From (\ref{incon3}),(\ref{incon4}) and (\ref{incon5}), we obtain
\begin{equation}
\hat{\theta}^{j}\wedge\theta_{j}{}^{a}=\big<\nabla^{\perp}\nu,Z_{a}\big>\wedge\hat{\theta}+\big<\nu,Z_{a}\big>d\hat{\theta}.
\end{equation}
That is,
\begin{equation}
\begin{split}
&\theta_{j}{}^{a}(\hat{Z}_{k})\hat{\theta}^{j}\wedge\hat{\theta}^{k}+\theta_{j}{}^{a}(\hat{Z}_{\bar k})\hat{\theta}^{j}\wedge\hat{\theta}^{\bar k}+\theta_{j}{}^{a}(\hat{T})\hat{\theta}^{j}\wedge\hat{\theta}\\
=&\big<\nabla_{\hat{Z}_{k}}^{\perp}\nu,Z_{a}\big>\hat{\theta}^{k}\wedge\hat{\theta}+\big<\nabla_{\hat{Z}_{\bar k}}^{\perp}\nu,Z_{a}\big>\hat{\theta}^{\bar k}\wedge\hat{\theta}+i\big<\nu,Z_{a}\big>\hat{\theta}^{j}\wedge\hat{\theta}^{\bar j},
\end{split}
\end{equation}
which implies
\begin{equation}
\begin{split}
\theta_{j}{}^{a}(\hat{T})&=\big<\nabla_{\hat{Z}_{ j}}^{\perp}\nu,Z_{a}\big>\\
0&=\big<\nabla_{\hat{Z}_{\bar j}}^{\perp}\nu,Z_{a}\big>\\
\theta_{j}{}^{a}(\hat{Z}_{\bar k})&=i\delta_{jk}\big<\nu,Z_{a}\big>\\
\theta_{j}{}^{a}(\hat{Z}_{ k})&=\theta_{k}{}^{a}(\hat{Z}_{j})=h^{a}_{jk},
\end{split}
\end{equation}
and thus
\begin{equation}
\theta_{j}{}^{a}=h^{a}_{jk}\hat{\theta}^{k}+i\delta_{jk}\big<\nu,Z_a\big>\hat{\theta}^{\bar k}+\big<\nabla^{\perp}_{\hat{Z}_j}\nu,Z_a\big>\hat{\theta}.
\end{equation}
Now we compute
\begin{equation}\label{incon6}
\begin{split}
d\theta^{k}&=\theta^{j}\wedge\theta_{j}{}^{k}+\theta^{a}\wedge\theta_{a}{}^{k}\\
&=\hat{\theta}^{j}\wedge\theta_{j}{}^{k}+\hat{\theta}\wedge\left(\big<\nu,Z_{a}\big>\theta_{a}{}^{k}\right).
\end{split}
\end{equation}
On the other hand,
\begin{equation}\label{incon7}
d\theta^{k}=d\hat{\theta}^{k}=\hat{\theta}^{j}\wedge\hat{\theta}_{j}{}^{k}+\hat{\theta}\wedge\tau^{k}.
\end{equation}
By Cartan lemma, there exists functions $B_{jl}^{k},B_{j(m+1)}^{k},B_{(m+1)l}^{k}$ and $B_{(m+1)(m+1)}^{k}$ such that
\begin{equation}\label{incon8}
\begin{split}
\hat{\theta}_{j}{}^{k}&=\theta_{j}{}^{k}+B_{jl}^{k}\hat{\theta}^{l}+B_{j(m+1)}^{k}\hat{\theta}\\
\hat{\tau}^{k}&=\left(\big<\nu,Z_{a}\big>\theta_{a}{}^{k}\right)+B_{(m+1)l}^{k}\hat{\theta}^{l}+B_{(m+1)(m+1)}^{k}\hat{\theta},
\end{split}
\end{equation}
where $B_{jl}^{k}=B_{lj}^{k}$ and $B_{j(m+1)}^{k}=B_{(m+1)j}^{k}$, for $1\leq j,k,l\leq m$. 
Since $\hat{\tau}^{k}=A^{k}{}_{\hat{l}}\hat{\theta}^{\bar l}$, comparing with (\ref{incon8}), we get
\begin{equation}\label{incon9}
\begin{split}
A^{k}{}_{\bar l}&=\sum_{a=m+1}^{n}\big<\nu,Z_{a}\big>\theta_{a}{}^{k}(\hat{Z}_{\bar l})=-\sum_{a=m+1}^{n}\big<\nu,Z_{a}\big>h^{\bar a}_{\bar{k}\bar{l}},\\
B_{(m+1)l}^{k}&=-\sum_{a=m+1}^{n}\big<\nu,Z_{a}\big>\theta_{a}{}^{k}(\hat{Z}_{l})=-i\delta_{kl}|\nu|^{2},\\
B_{(m+1)(m+1)}^{k}&=-\sum_{a=m+1}^{n}\big<\nu,Z_{a}\big>\theta_{a}{}^{k}(\hat{T})=\sum_{a=m+1}^{n}\big<\nu,Z_{a}\big>\big<\nabla^{\perp}_{\hat{Z}_{\bar k}}\nu,Z_{\bar a}\big>.
\end{split}
\end{equation}
Finally, since $\theta_{j}{}^{k}+\theta_{\bar k}{}^{\bar j}=0$ and $\hat{\theta}_{j}{}^{k}+\hat{\theta}_{\bar k}{}^{\bar j}=0$, we have, from (\ref{incon8}), $B_{jl}^{k}=0$, and hence
\begin{equation}
\hat{\theta}_{j}{}^{k}=\theta_{j}{}^{k}-i\delta_{jk}|\nu|^{2}\hat{\theta}.
\end{equation}
This completes the proof of (\ref{incon2}).

\subsection{The integrability condition}\label{tic}
Let $(M,\hat{J},\hat{\theta})$ be a pseudohermitian submanifold of $H_n$. We choose a Darboux frame $\{Z_{\beta},T\}$ over $M$. Let $\{\theta^{\beta},\theta\}$ be the dual of $\{Z_{\beta},T\}$.
\begin{de} 
The {\bf restriction} to $M$ of the structure equations of $H_n$, 
\begin{equation}\label{incon01}
\begin{split}
d\theta^{\beta}&=\theta^{\gamma}\wedge\theta_{\gamma}{}^{\beta}\\
d\theta&=i\theta^{\gamma}\wedge\theta^{\bar{\gamma}}\\
d\theta_{\sigma}{}^{\beta}&=\theta_{\sigma}{}^{\gamma}\wedge\theta_{\gamma}{}^{\beta},
\end{split}
\end{equation}
is defined to be the {\bf integrability conditions} of $M$. Note that the restrictions of $\theta^{\beta}$ and $\theta_{\beta}{}^{\gamma}$ to $M$ have the expressions of the forms as (\ref{incon2}) specifies.. 
\end{de}

\section{The existence theorem}\label{tet}
In this section, we would like to show Theorem B.
Let $(M,J_{M},\theta_{M})$ be a pseudohermitian manifold with CR dimension $m$. Since the existence theory is local, we assume that $M$ is simply connected. Putting $\xi_{M}=\textrm{ker}\theta_M$ and $\eta=\theta_{M}$. 
\begin{itemize}
\item  Let $\xi^\perp_M$ be a complex vector bundle over $M$, of complex dimension $n-m$, with a Hermitian metric $h^\perp_M$ and a connection $\nabla^M$ compatible with $h^\perp_M$.
\item Suppose $\{W_{1},\cdots, W_{m}\in T_{1,0}M\}$ is an orthonormal CR holomorphic frame field of $M$. Its dual is denoted by $\{\eta^1,\cdots,\eta^m\}$. Let $\hat{\eta}_{j}{}^k$ be the pseudohermitian conection forms w.r.t. $W_j$. We have $\hat{\eta}_{j}{}^k+\hat{\eta}_{\bar k}{}^{\bar j}=0$.
\item Suppose $\{W_{m+1},\cdots, W_{n}\}$ is an orthonormal frame field of $\xi^\perp_M$ w.r.t. $h^\perp_M$ and $\eta_{a}{}^{b}$ are the connection forms w.r.t. $\{W_a\}$, i.e.,
\begin{equation}
\nabla^{M}W_a=\eta_a{}^b\otimes W_b.
\end{equation}
We have $\eta_a{}^b+\eta_{\bar b}{}^{\bar a}=0$.
\item Let $II_M: T_{1,0}M\times T_{1,0}M\rightarrow\xi^{\perp}_M$ be a $\xi^{\perp}_M$-valued complex bilinear form.
\item Let $\mu$ be a real section of the bundle $\xi^{\perp}_{M}$ over $M$. And define 
\begin{equation}
\begin{split}
\eta^{a}&=\big<\mu,W_{a}\big>\eta,\\
\eta_{j}{}^{k}&=\hat{\eta}_{j}{}^{k}+i\delta_{jk}|\mu|^{2}\eta,\\
\eta_j{}^a&=g^{a}_{jk}\eta^{k}+i\delta_{jk}\big<\mu,W_a\big>\eta^{\bar k}+\big<\nabla^{M}_{W_j}\mu,W_a\big>\eta,
\end{split}
\end{equation}
where $\big<\ ,\ \big>=h_{M}^{\perp}$ and $g^{a}_{jk}=\big<II_{M}(W_{j},W_{k}),W_{a}\big>$.
\item Finally, define $\eta_{a}{}^{j}$ by $\eta_{j}{}^{a}+\eta_{\bar a}{}^{\bar j}=0$.
\end{itemize}

\begin{thm}\label{mainthm02}
Suppose that the pseudohermitian manifold $(M^{2m+1},J_M,\theta_M)$, together with $II_M$, the compatible connection $\nabla^M$ and the section $\mu$ satisfies the {\bf integrability conditions}, in the sense that $\eta^{\beta}, \eta$ and $\eta_{\gamma}{}^{\beta}$ satisfy (\ref{incon01}).Then
\begin{itemize}
\item There exists an embedding $\phi$ such that $(M,J_M,\theta_M)$ can be embedded into $H_{n}$ with CR dimensionn m. 
\item In addition, there exists a vector bundle isomorphism $\Psi:\xi^{\perp}_M\rightarrow\hat{\xi}^\perp_{1,0}$, covering $\phi$, such that $\Psi^*II=II_M$,\ $\Psi^*\nabla^{\perp}=\nabla^M$, and $\Psi^{*}\nu=\mu$,
where $\hat{\xi}^\perp_{1,0},II,\nabla^\perp$ and $\nu$ are, respectively, the induced normal bundle, second fundamental form, normal connection and fundamental vector field $\nu$ over $\phi(M)$. 
\end{itemize}
\end{thm}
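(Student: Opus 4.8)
The plan is to read the construction preceding Theorem \ref{mainthm02} as an application of the existence theorem for Darboux derivatives (Theorem \ref{tetthm}), run in reverse to the computation (\ref{incon2}). First I would assemble the given data into a single $psh(n)$-valued one-form on $M$. Using the block pattern (\ref{cvdade}) of the complex Darboux derivative, set
\begin{equation*}
\omega_{F}=\left(\begin{array}{ccc} 0 & 0 & 0 \\ \vartheta^{t} & \eta_{\gamma}{}^{\beta} & 0 \\ \eta & i\bar{\vartheta} & 0\end{array}\right),\qquad \vartheta=(\eta^{1},\dots,\eta^{n}),
\end{equation*}
where $\eta^{j},\eta,\eta_{j}{}^{k}$ are the coframe and pseudohermitian connection forms of $(M,J_{M},\theta_{M})$, and $\eta^{a},\eta_{j}{}^{a},\eta_{a}{}^{j},\eta_{a}{}^{b}$ are the forms manufactured from $II_{M},\nabla^{M}$ and $\mu$ exactly as in the displayed definitions. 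Before anything else I would check that $\omega_{F}$ is genuinely $psh(n)$-valued, i.e. $\eta_{\gamma}{}^{\beta}+\eta_{\bar\beta}{}^{\bar\gamma}=0$ for every index type: the block $\eta_{j}{}^{k}$ works because $\hat{\eta}_{j}{}^{k}+\hat{\eta}_{\bar k}{}^{\bar j}=0$ and the added term $i\delta_{jk}|\mu|^{2}\eta$ is purely imaginary with $\eta$ real; the block $\eta_{a}{}^{b}$ works by compatibility of $\nabla^{M}$; and the mixed blocks work because $\eta_{a}{}^{j}$ was \emph{defined} by $\eta_{j}{}^{a}+\eta_{\bar a}{}^{\bar j}=0$. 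The content of the hypothesis is then precisely that this $\omega_{F}$ satisfies the Maurer--Cartan equation $d\omega_{F}+\omega_{F}\wedge\omega_{F}=0$, since the integrability conditions (\ref{incon01}) are the three scalar components of that matrix identity.

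With $\omega_{F}$ in hand, I would invoke Theorem \ref{tetthm} together with the hypothesis that $M$ is simply connected to produce a \emph{global} smooth map $F:M\to PSH(n)$ with $\omega_{F}=F^{*}\omega$ (the local solutions of Theorem \ref{tetthm} patch along $M$ because the monodromy of a Maurer--Cartan form vanishes on a simply connected base). Write $F(p)=(\phi(p);Z_{\beta}(p),T)$ and take $\phi:M\to H_{n}$ as the candidate map. Reading the base-point line of the complex motion equations $dp=Z_{\beta}\otimes\theta^{\beta}+Z_{\bar\beta}\otimes\theta^{\bar\beta}+T\otimes\theta$, and using that $\{\eta^{j},\eta^{\bar j},\eta\}$ is dual to $\{W_{j},W_{\bar j},T_{M}\}$ (with $T_{M}$ the Reeb field of $\theta_{M}$) while $\eta^{a}=\langle\mu,W_{a}\rangle\eta$, I obtain $d\phi(W_{j})=Z_{j}$ and $d\phi(T_{M})=T+\langle\mu,W_{a}\rangle Z_{a}+\langle\mu,W_{\bar a}\rangle Z_{\bar a}$. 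Since $\{Z_{\beta},Z_{\bar\beta},T\}$ is a frame for the complexified tangent spaces of $H_{n}$, these images are linearly independent, so $\phi$ is an immersion, hence (after shrinking if necessary) a local embedding.

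It then remains to identify the induced geometry. That $\phi$ is pseudohermitian follows by pulling back $\Theta$: since $\Theta(Z_{\beta})=0$ and $\Theta(T)=1$, the computation above gives $\phi^{*}\Theta=\eta=\theta_{M}$, and $d\phi(T_{1,0}M)=\textrm{span}\{Z_{j}\}\subset\xi_{1,0}$ shows the CR structures agree; thus $\phi(M)$ is a pseudohermitian submanifold of CR dimension $m$ with $\hat{\xi}_{1,0}=\textrm{span}\{Z_{j}\}$, and, by the $g_{\theta}$-orthonormality built into the frame, $\{Z_{a}\}$ frames the induced normal bundle $\hat{\xi}^{\perp}_{1,0}$. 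I would then define $\Psi:\xi^{\perp}_{M}\to\hat{\xi}^{\perp}_{1,0}$ by $\Psi(W_{a})=Z_{a}$; it covers $\phi$ and is an isometry since both frames are orthonormal. The three identifications are read off (\ref{incon2}) backwards: the restrictions of $\theta^{\beta},\theta_{\beta}{}^{\gamma}$ to $\phi(M)$ are computed there in terms of the \emph{induced} $II,\nabla^{\perp},\nu$, while $F^{*}\omega=\omega_{F}$ forces those same restrictions to equal our $\eta$-forms built from $II_{M},\nabla^{M},\mu$. Matching the coefficient of $\hat{\theta}^{k}$ in $\theta_{j}{}^{a}$ gives $\Psi^{*}II=II_{M}$, matching $\theta_{a}{}^{b}$ gives $\Psi^{*}\nabla^{\perp}=\nabla^{M}$, and matching $\theta^{a}=\langle\nu,Z_{a}\rangle\hat{\theta}$ with $\eta^{a}=\langle\mu,W_{a}\rangle\eta$ (Proposition \ref{reofcof}) gives $\Psi^{*}\nu=\mu$.

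The routine parts are the $psh(n)$/Maurer--Cartan bookkeeping of the first paragraph and the dual-frame reading of the motion equations in the second. The step I expect to be the genuine obstacle is the consistency claim of the last paragraph: that the frame $\{Z_{\beta},T\}$ produced abstractly by $F$ really is a Darboux frame of the image, with $Z_{j}$ tangent and $Z_{a}$ normal, so that the invariants $II,\nabla^{\perp},\nu$ computed from the honest embedding coincide with the forms we fed in. Making this precise is exactly where one must use that (\ref{incon2}) is an intrinsic identity for \emph{any} pseudohermitian submanifold applied to $\phi(M)$, and then check it against Proposition \ref{reofcof} and the orthonormality of the adapted metric $g_{\theta}$; once this is in place, the two sets of forms match coefficient by coefficient and the theorem follows.
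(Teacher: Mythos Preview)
Your proposal is correct and follows essentially the same approach as the paper: assemble the $\eta$-forms into a $psh(n)$-valued one-form (the paper passes explicitly to the real version $\zeta$, you stay in the complex form, but this is cosmetic), invoke the existence theorem for Darboux derivatives to produce $F:M\to PSH(n)$, then set $\phi=\pi\circ F$ and $\Psi(W_{a})=Z_{a}$. In fact you supply considerably more detail than the paper, which after constructing $f$ simply says ``it is easy to check that $\Psi$ and $\phi$ satisfy all what we want''; your discussion of why $\phi$ is an immersion, why the frame is Darboux, and how the invariants match via (\ref{incon2}) is exactly the content hidden behind that sentence.
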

\begin{proof}
Let $\flat=(\eta^{1},\cdots,\eta^{n}), \bar{\flat}=(\eta^{\bar 1},\cdots,\eta^{\bar n})$. Define the matrix $\Pi$ by
\begin{equation}\label{cvdade1}
\Pi=\left(\begin{array}{ccc}
0 & 0 & 0  \\
\flat^{t}& \eta_{\gamma}{}^{\beta} & 0 \\
\eta& i\bar{\flat}& 0
\end{array}\right).
\end{equation}
The integrability conditions means that 
\begin{equation}\label{incon10}
d\Pi+\Pi\wedge\Pi=0.
\end{equation}
Taking the real version $\zeta$ of $\Pi$,
\begin{equation*}
\zeta=\left(\begin{array}{cccc}
0 & 0 & 0 & 0 \\
\lambda^{\beta} & \lambda_{\alpha}{}^{\beta} & \lambda_{n+\alpha}{}^{\beta} & 0 \\
\lambda^{n+\beta} & \lambda_{\alpha}{}^{n+\beta} & \lambda_{n+\alpha}{}^{n+\beta} & 0 \\
\lambda & \lambda^{n+\alpha} & -\lambda^{\alpha} & 0
\end{array}\right),
\end{equation*}
where $\lambda=\eta, \eta^{\beta}=\lambda^{\beta}+i\lambda^{n+\beta}$ and $\eta_{\gamma}{}^{\beta}=\lambda_{\gamma}{}^{\beta}+i\lambda_{\gamma}{}^{n+\beta}$.
Then $\eta_{\gamma}{}^{\beta}+\eta_{\bar\beta}{}^{\bar\gamma}=0$ implies that $\zeta$  is a $psh(n)$-valued one form. And (\ref{incon10}) is equivalent to $d\zeta+\zeta\wedge\zeta=0$. Therefore, by calculus on Lie groups, we have that $\zeta$ is the Darboux derivative of some map $f:M\rightarrow PSH(n)$, that is, 
\begin{equation}\label{dadef}
\zeta=f^{*}\omega. 
\end{equation}
Define a map $\phi:M\rightarrow H_{n}$ by $\phi=\pi\circ f$, where $\pi$ is the bundle projection $\pi:PSH(n)\rightarrow H_{n}$, and define a bundle map $\Psi:\xi^{\perp}_M\rightarrow\hat{\xi}^\perp_{1,0}$ by
$\Psi(p,W_{a})=(\phi(p),Z_{a})$. Then, using (\ref{dadef}), it is easy to check that $\Psi$ and $\phi$ satisfy all what we want. This completes the proof.
\end{proof}

\section{Rigidity theorems for submanifolds with CR co-dimensione one}\label{trt}
 In this section, we prove some rigidity theorems for pseudohermitian submanifolds, including both the nondegenerate and  degenerate cases.
\begin{thm}\label{mainthm03}
Let $(M,\hat{J},\hat{\theta})$ be a {\bf vertical}, simply connected pseudohermitian submanifold of $H_{n}$ with CR dimension $m=n-1$. Suppose that the second fundamental form $II=0$.Then $M$ is an open subset $U$ of $H_{n-1}=\{z_{n}=0\}$ after a Heisenberg rigid motion.
\end{thm}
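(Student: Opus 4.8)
The plan is to show that, under the hypotheses $\nu\equiv 0$ (vertical) and $II=0$, the Darboux derivative $\omega_{f}$ of a Darboux frame $f\colon M\to PSH(n)$ takes values in the Lie subalgebra of $psh(n)$ that integrates to the stabilizer of $H_{n-1}$; integrating the motion equations will then force the frame, and hence $M$ itself, into a single coset of that stabilizer subgroup, which is $H_{n-1}$ up to a rigid motion. Since $m=n-1$, the normal index $a$ takes only the value $n$, so the normal bundle is a complex line bundle and there is a single normal connection form $\theta_{n}{}^{n}$ to control.

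First I would read off $\omega_{f}$ from (\ref{incon2}). Setting $\nu=0$ kills $\langle\nu,Z_{n}\rangle$ and $\nabla^{\perp}\nu$, while $II=0$ kills the coefficients $h^{n}_{jk}$; Proposition \ref{reofcof} gives $\theta^{n}|_{M}=\langle\nu,Z_{n}\rangle\hat\theta=0$. Hence on $M$,
\begin{equation*}
\theta^{j}=\hat\theta^{j},\quad \theta^{n}=0,\quad \theta=\hat\theta,\quad \theta_{j}{}^{k}=\hat\theta_{j}{}^{k},\quad \theta_{j}{}^{n}=0,
\end{equation*}
and, by $\theta_{j}{}^{n}+\theta_{\bar n}{}^{\bar j}=0$, also $\theta_{n}{}^{j}=0$. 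The only surviving datum beyond the intrinsic pseudohermitian structure is the purely imaginary normal connection form $\theta_{n}{}^{n}$. Next I would eliminate it: the Ricci-like equation (the last line of (\ref{sefor0})) reduces, since $\theta_{n}{}^{k}=0$ and the only normal index is $n$, to $d\theta_{n}{}^{n}=\theta_{n}{}^{n}\wedge\theta_{n}{}^{n}=0$, so the normal line bundle carries a flat connection. As $M$ is simply connected this bundle is trivial and admits a global parallel unit section; after the gauge change $Z_{n}\mapsto e^{i\psi}Z_{n}$, which sends $\theta_{n}{}^{n}\mapsto\theta_{n}{}^{n}+i\,d\psi$, I may assume $\theta_{n}{}^{n}=0$ and that $f$ is defined on all of $M$. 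With this choice every entry of $\omega_{f}$ carrying the index $n$ vanishes, so $\omega_{f}$ is valued in the subalgebra $psh(n-1)\subset psh(n)$ of translations, rotations and Reeb terms that ignore the $n$-th complex coordinate.

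Finally I would integrate. Writing $f=A$ as in (\ref{grre}) with $dA=A\,\omega_{f}$ by (\ref{moeqdrfr}), fix a base point $p_{0}\in M$ and put $g=A(p_{0})^{-1}A$, so that $g(p_{0})=I$ and $g^{-1}dg=\omega_{f}$ is $psh(n-1)$-valued. Thus $dg$ is everywhere tangent to the left-invariant distribution determined by $psh(n-1)$, whose integral leaves are the cosets of the connected subgroup $G_{0}=PSH(n-1)\subset PSH(n)$; since $M$ is connected and $g(p_{0})=I$, the image $g(M)$ lies in $G_{0}$, and hence $A(p)\in\Phi_{0}\,G_{0}$ with $\Phi_{0}=A(p_{0})$ fixed. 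Each element of $G_{0}$ fixes the $n$-th complex coordinate, hence preserves $H_{n-1}=\{z_{n}=0\}$ and carries $0$ into it; reading off the base point via $p=A(p)\cdot 0$ yields $p(M)\subset\Phi_{0}(H_{n-1})$. Because $dp$ has rank $2n-1=\dim H_{n-1}$ (the pulled-back forms $\hat\theta^{1},\dots,\hat\theta^{n-1},\hat\theta$ are independent), the map $\Phi_{0}^{-1}$ carries $M$ diffeomorphically onto an open subset $U$ of $H_{n-1}$, which is the assertion.

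The step I expect to require the most care is the identification of the subalgebra: verifying that the $n$-free block of $\omega_{f}$ really is the Lie algebra of the stabilizer of $H_{n-1}$ in $PSH(n)$, and that its connected subgroup $G_{0}$ both preserves $H_{n-1}$ and maps the origin into it. The decomposition $\Phi=L_{p}\circ\Phi_{R}$ recalled in the introduction, with $R\in SO(2n)$ block-diagonal fixing the $(x_{n},y_{n})$-plane, is what makes this transparent; the flatness-and-gauge argument for $\theta_{n}{}^{n}$ and the resulting global existence of the Darboux frame on simply connected $M$ are the only other points needing justification.
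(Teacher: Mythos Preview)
Your proof is correct and follows essentially the same route as the paper: both use $\nu=0$ and $II=0$ to kill $\theta^{n}$ and $\theta_{j}{}^{n}$, observe from the Ricci-like equation that $\theta_{n}{}^{n}$ is closed and gauge it to zero, and then integrate the resulting $psh(n-1)$-valued Darboux derivative. The only cosmetic difference is that the paper finishes by invoking Theorem~\ref{mainthm01} (building the bundle isomorphism $F(x,Z_{n}(x))=(\varphi(x),\mathring{Z}_{n})$ and appealing to the uniqueness theorem), whereas you unwrap that black box and integrate the Maurer--Cartan form directly into a coset of $PSH(n-1)\subset PSH(n)$.
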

\begin{proof}
In the case $m=n-1$, we write $\theta_{j}{}^{n}=h_{jk}\theta^{k}$, here $h_{jk}$ are the coefficients of the second fundamental form $II$. If $II=0$, then $\theta_{j}{}^{n}=0$. On the other hand, $\nu=0$ implies $\theta^{n}=0$. Hence the structure equations of $H_{n}$, restricting to $M$, reduces to
\begin{equation}\label{ver4}
\begin{split}
d\theta^{j}&=\theta^{k}\wedge\theta_{k}{}^{j},\\
d\theta&=i\theta^{k}\wedge\theta^{\bar k},\\
d\theta_{j}{}^{l}&=\theta_{j}{}^{k}\wedge\theta_{k}{}^{l},\\
d\theta_{n}{}^{n}&=0.
\end{split}
\end{equation}
The last equation of (\ref{ver4}) says that $\theta_{n}{}^{n}$ is closed, and hence locally is exact. By the transformation law of the normal connection, we can choose a normal frame $Z_{n}$ such that the corresponding connection form $\theta_{n}{}^{n}$ vanishes. On the other hand, the first three equation of (\ref{ver4}) is just the structure equations of $H_{n-1}$. This means that $M$ is an open part $U$ of $H_{n-1}\subset H_{n}$, up to a pseudohermitian transformatio $\varphi$ from $M$ to $U$. Define $F$ by $F(x,Z_{n}(x))=(\varphi(x),\mathring{Z}_{n})$. Then $F$ defines the normal bundle isomorphism covering $\varphi$ which preserving the induced pseudohermitian structures, the second fundamental forms and the normal connections of $M$ and $U$, respectively. Hence $\varphi$ is just the restriction of a Heisenberg rigid motion. 
\end{proof}

For a vertical pseudohermitian submanifold of $H_{n}$, we define a flat point of $M$ to be a point such that $II=0$ at that point. Theorem \ref{mainthm06} says that the induced pseudohermitian structure is the only invariant for vertical pseudohermitian submanifolds without flat points.

\begin{thm}\label{mainthm04}
Let $(M,\hat{J},\hat{\theta})$ and $(N,\widetilde{J},\widetilde{\theta})$ be two {\bf vertical}, simply connected pseudohermitian submanifolds of $H_{n}$ without flat points. Suppose both of them are of CR dimension $m=n-1$. If there exists a pseudohermitian transformation $\phi:M\rightarrow N$, then $\phi=\Phi|_{M}$ for some Heisenberg rigid motion $\Phi$.
\end{thm}
\begin{proof}
By theorem \ref{mainthm01}, it suffices to show that both the second fundamental form and the normal connection are completely determined by the induced pseudohermitian structure. We write $\theta_{j}{}^{n}=h_{jk}\theta^{k}$. Then, from the Gauss-like equation, we have
\begin{equation}\label{ver1}
d\theta_{j}{}^{l}-\theta_{j}{}^{k}\wedge\theta_{k}{}^{l}=-h_{jp}h_{\bar{l}\bar{q}}\theta^{p}\wedge\theta^{\bar q}.
\end{equation} 
On the other hand
\begin{equation}\label{ver2}
\begin{split}
d\theta_{j}{}^{l}-\theta_{j}{}^{k}\wedge\theta_{k}{}^{l}&=d\hat{\theta}_{j}{}^{l}-\hat{\theta}_{j}{}^{k}\wedge\hat{\theta}_{k}{}^{l}\\
&=R_{j}{}^{l}{}_{p\bar q}\theta^{p}\wedge\theta^{\bar q}.
\end{split}
\end{equation}
From (\ref{ver1}) and (\ref{ver2}), we see that the Gauss-like equation is equivalent to 
\begin{equation}\label{ver3}
R_{j\bar{l}p\bar q}=-h_{jp}h_{\bar{l}\bar{q}},
\end{equation}
which implies that 
\begin{equation}
R_{j\bar{l}p\bar q}=0 \Leftrightarrow II=0.
\end{equation}
If $II\neq 0$, then there exists $h_{jk}\neq 0$ for some $j,k$. Since $II$ is a symmetric bilinear form, after a frame transformation, we can assume, w.l.o.g., that $h_{11}\neq 0$. Then we have
\begin{equation}\label{ver5}
R_{j\bar{1}p\bar 1}=-h_{jp}h_{\bar{1}\bar{1}}.
\end{equation}
In particular,
\begin{equation}
h_{11}=\sqrt{-R_{1\bar{1}1\bar 1}}e^{i\varphi}, \ \ \ \textrm{for some}\ \varphi.
\end{equation}
On the other hand, if we take another orthonormal frame field $\{\tilde{Z}_{\beta}\}$ such that 
\begin{equation}
\tilde{Z}_{j}=Z_{j},\ \ \tilde{Z}_{n}=e^{i\psi}Z_{n},
\end{equation}
for some $\psi$. Then we have the transformation law for connection forms
\begin{equation}
\begin{split}
\tilde{\theta}_{j}{}^{k}&=\theta_{j}{}^{k}\\
\tilde{\theta}_{j}{}^{n}&=e^{-i\psi}\theta_{j}{}^{n},\ \ \ \ \tilde{\theta}_{n}{}^{j}=e^{i\psi}\theta_{n}{}^{j}\\
\tilde{\theta}_{n}{}^{n}&=\theta_{n}{}^{n}+id\psi.
\end{split}
\end{equation}
Notice that $\theta_{j}{}^{n}=h_{jk}\theta^{k},\ \tilde{\theta}_{j}{}^{n}=\tilde{h}_{jk}\tilde{\theta}^{k}$ and $\tilde{\theta}^{k}=\theta^{k}$, hence we immediately have 
\begin{equation}
\tilde{h}_{jk}=e^{-i\psi}h_{jk},\ \ \ \textrm{for all}\ j,k.
\end{equation}
In particular, $\tilde{h}_{11}=e^{-i\psi}h_{11}=e^{i(\varphi-\psi)}\sqrt{-R_{1\bar{1}1\bar 1}}$. Taking $\psi=\varphi$, we have 
\begin{equation}\label{ver6}
\tilde{h}_{11}=\sqrt{-R_{1\bar{1}1\bar 1}}=\sqrt{-\tilde{R}_{1\bar{1}1\bar 1}}.
\end{equation}
Formula (\ref{ver6}) means that we can always choose a frame field $\{Z_{\beta}\}$ such that $h_{11}=\sqrt{-R_{1\bar{1}1\bar 1}}$, and hence 
\begin{equation}
h_{jk}=-\frac{R_{j\bar{1}k\bar 1}}{\sqrt{-R_{1\bar{1}1\bar 1}}},\ \ \ \textrm{for all}\ j,k.
\end{equation}
This means that the second fundamental form $II$ is completely determined by the induced pseudohermitian structure.

We proceed to show that the normal connection is also completely determined by the induced pseudohermitian structure. For each $j$,
\begin{equation}\label{ver7}
d\theta_{j}{}^{n}=d(h_{jk}\theta^{k})=(dh_{jk}-h_{jl}\theta_{k}{}^{l})\wedge\theta^{k}.
\end{equation}
On the other hand, 
\begin{equation}\label{ver8}
\begin{split}
d\theta_{j}{}^{n}&=\theta_{j}{}^{k}\wedge\theta_{k}{}^{n}+\theta_{j}{}^{n}\wedge\theta_{n}{}^{n}\\
&=(h_{lk}\theta_{j}{}^{l}-h_{jk}\theta_{n}{}^{n})\wedge\theta^{k}.
\end{split}
\end{equation}
From (\ref{ver7}) and (\ref{ver8}), we have, for each $j,k$, 
\begin{equation}
dh_{jk}-h_{jl}\theta_{k}{}^{l}-h_{lk}\theta_{j}{}^{l}+h_{jk}\theta_{n}{}^{n}=\sum_{l=1}^{n-1}B_{jkl}\theta^{l},
\end{equation}
for some $B_{jkl}$, which satisfying $B_{jkl}=B_{jlk}$. In particular
\begin{equation}\label{ver9}
h_{11}\theta_{n}{}^{n}=-(dh_{11}-h_{1l}\theta_{1}{}^{l}-h_{l1}\theta_{1}{}^{l})+\sum_{l=1}^{n-1}B_{11l}\theta^{l}.
\end{equation}
The conjugate of (\ref{ver9}) is,
\begin{equation}\label{ver10}
-h_{11}\theta_{n}{}^{n}=h_{\bar{1}\bar 1}\theta_{\bar n}{}^{\bar n}=-(dh_{\bar{1}\bar 1}-h_{\bar{1}\bar l}\theta_{\bar 1}{}^{\bar l}-h_{\bar{l}\bar 1}\theta_{\bar 1}{}^{\bar l})+\sum_{l=1}^{n-1}B_{\bar{1}\bar{1}\bar l}\theta^{\bar l}.
\end{equation}
Taking the sum of (\ref{ver9}) and (\ref{ver10})
\begin{equation}\label{ver11}
\sum_{l=1}^{n-1}B_{11l}\theta^{l}+\sum_{l=1}^{n-1}B_{\bar{1}\bar{1}\bar l}\theta^{\bar l}=(h_{11,l}\theta^{l}+h_{11,\bar{l}}\theta^{\bar l}+h_{11,0}\theta)+\ \textrm{conjugate},
\end{equation}
which implies that $B_{11l}=h_{11,l}+h_{\bar{1}\bar{1},l}$. Substituting this into (\ref{ver9}), we get
\begin{equation}
\theta_{n}{}^{n}=\frac{h_{\bar{1}\bar{1},l}\theta^{l}-h_{11,\bar l}\theta^{\bar l}-h_{11,0}\theta}{h_{11}},
\end{equation} 
which means that $\theta_{n}{}^{n}$ is completely determined by the induced pseudohermitian structure.
\end{proof}

\begin{rem}
(i) From (\ref{ver11}), we also get $h_{11,0}+h_{\bar{1}\bar 1,0}=0$. Therefore, in the case $n=2$, we have $Th_{11}=0$ or $TR=R_{0}=0$.\\
(ii) Also, for $n=2$, we have that $\theta_{2}{}^{2}=2\theta_{1}{}^{1}-d(\ln{h_{11}})+2Z_{1}(\ln{h_{11}})\theta^{1}$.
\end{rem}

\begin{thm}\label{mainthm05}
Let $(M,\hat{J},\hat{\theta})$ and $(N,\widetilde{J},\widetilde{\theta})$ be two simply connected pseudohermitian submanifolds of $H_{n}$ with CR dimension $m=n-1$. Suppose, in addition, that their fundamental vector fields are {\bf nowhere zero}. If they have the same (induced) pseudohermitian structures. Then they locally differ by a Heisenberg rigid motion. More explicitly, if there exists a pseudohermitian transformation $\phi:M\rightarrow N$, then $\phi=\Phi|_{M}$ for some Heisenberg rigid motion $\Phi$.
\end{thm}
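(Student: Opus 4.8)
The plan is to reduce to Theorem \ref{mainthm01} exactly as in the vertical case (Theorem \ref{mainthm04}): it suffices to produce a normal-bundle isometry $F:\hat{\xi}^{\perp}_{1,0}\to\widetilde{\xi}^{\perp}_{1,0}$ covering $\phi$ that preserves the second fundamental form, the normal connection, and the fundamental vector field. Since $m=n-1$ the normal bundle is a complex \emph{line} bundle, and the hypothesis $\nu\neq0$ everywhere is precisely what makes the non-vertical case tractable: the real vector $\nu$ singles out a canonical normal frame by setting $e_{n}=\nu/|\nu|$, $e_{2n}=Je_{n}$, $Z_{n}=\tfrac12(e_{n}-ie_{2n})$, which removes the residual $U(1)$ gauge freedom that was present in the vertical argument. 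In this canonical frame $\langle\nu,Z_{n}\rangle$ is \emph{real and positive}, equal to $\tfrac12|\nu|$. I would define $F$ on the canonical frames by $F e_{n}=\widetilde{e}_{n}$ (equivalently $F\nu=\widetilde{\nu}$ once the norms are matched); then $F$ preserves $\nu$ tautologically, and the whole problem collapses to showing that $|\nu|^{2}$, the coefficients $h_{jk}$ of $II$, and the normal connection form $\theta_{n}{}^{n}$ are all computable from the induced pseudohermitian data (Webster torsion and curvature), so that $\phi$ being pseudohermitian forces them to agree on $M$ and $N$.

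The computational core is to extract two algebraic relations from the restriction to $M$ of the structure equations of $H_{n}$, using the expressions (\ref{incon2}) with $a=n$. First, comparing $d\theta^{j}=\theta^{\gamma}\wedge\theta_{\gamma}{}^{j}$ with the intrinsic equation $d\hat{\theta}^{j}=\hat{\theta}^{k}\wedge\hat{\theta}_{k}{}^{j}+\hat{\theta}\wedge\hat{\tau}^{j}$ and reading off the $\hat{\theta}^{\bar k}$-component yields a torsion relation of the shape $A_{jk}=-\langle\nu,Z_{n}\rangle\,h_{jk}$ (up to conjugation conventions); since $\langle\nu,Z_{n}\rangle\neq0$ this both recovers the equivalence $II=0\Leftrightarrow A_{jk}=0$ asserted in Theorem D(ii) and, when $A\neq0$, expresses $h_{jk}$ explicitly through the known $A_{jk}$ and $|\nu|$. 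Second, the Gauss-like equation from $d\theta_{j}{}^{l}=\theta_{j}{}^{\gamma}\wedge\theta_{\gamma}{}^{l}$ (where the $i\delta_{jk}|\nu|^{2}\hat{\theta}$ pieces of $\theta_{j}{}^{k}|_{M}$ conveniently cancel in the wedge) produces on the $(1,1)$-part a relation of the shape
\begin{equation*}
R_{j\bar l k\bar q}=-h_{jk}h_{\bar l\bar q}+\delta_{jl}\delta_{kq}|\nu|^{2}+\delta_{kl}\delta_{jq}|\langle\nu,Z_{n}\rangle|^{2}.
\end{equation*}
Substituting the torsion relation turns this into a quadratic in $t=|\langle\nu,Z_{n}\rangle|^{2}>0$ with coefficients built from the known Webster curvature and torsion, whose positive root is forced; hence $|\nu|^{2}$, and then $h_{jk}$ and $II$, are determined by the induced structure. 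In the torsion-free case $A=0$ one gets $h_{jk}=0$ and the Gauss equation reduces to the constant round-sphere curvature, from which $|\nu|^{2}$ is read off directly (this is the situation of Theorem D(ii)).

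Finally I would pin down the normal connection. In the canonical frame $\theta_{n}{}^{n}$ is a genuine $1$-form on $M$, and differentiating the identity $\theta_{j}{}^{n}=h_{jk}\hat{\theta}^{k}+i\langle\nu,Z_{n}\rangle\hat{\theta}^{\bar j}+\langle\nabla^{\perp}_{\hat{Z}_{j}}\nu,Z_{n}\rangle\hat{\theta}$ and comparing with the Codazzi-like equation coming from $d\theta_{j}{}^{n}=\theta_{j}{}^{\gamma}\wedge\theta_{\gamma}{}^{n}$ should express $\theta_{n}{}^{n}$ in terms of covariant derivatives of the already-determined $h_{jk}$ and $\nu$, exactly mimicking (\ref{ver7})--(\ref{ver9}) in the proof of Theorem \ref{mainthm04} but now carrying the extra $\nu$-dependent terms. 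With $|\nu|^{2}$, $II$, $\nabla^{\perp}$ and $\nu$ all matched between $M$ and $N$ via $F$, Theorem \ref{mainthm01} produces the Heisenberg rigid motion $\Phi$ with $\phi=\Phi|_{M}$.

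I expect the main obstacle to be twofold. First, one must verify that the Gauss quadratic \emph{consistently} pins down a single positive value $t=|\langle\nu,Z_{n}\rangle|^{2}$ across all index combinations rather than overdetermining it incompatibly; since the submanifolds exist, a common positive root must occur, but this requires a careful compatibility check. Second, carrying the extra $\nu$-terms cleanly through the Codazzi differentiation to isolate $\theta_{n}{}^{n}$ is the most delicate step, and keeping exact track of the conjugation conventions relating $\langle\nu,Z_{n}\rangle$, $|\nu|^{2}$ and the lowered torsion $A_{jk}$ will be the most error-prone bookkeeping.
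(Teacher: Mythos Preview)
Your proposal is correct and follows essentially the same route as the paper: fix the canonical normal frame $e_{n}=\pm\nu/|\nu|$, read off the torsion--second fundamental form relation $h_{jk}=A_{jk}/|\nu|$, use the Gauss-like equation to determine $|\nu|^{2}$ from curvature and torsion, and finally use the Codazzi-like equation to solve for $\theta_{n}{}^{n}$; once the whole Darboux derivative is expressed in intrinsic pseudohermitian data, Theorem~\ref{mainthm01} finishes.

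Two small remarks on the execution. First, your worry about the Gauss system being overdetermined is handled in the paper more cleanly than by checking compatibility across all index choices: one simply \emph{traces} the Gauss relation to obtain the scalar equation
\[
R=-\frac{|A|^{2}}{|\nu|^{2}}+m(m+1)|\nu|^{2},
\]
a single quadratic in $|\nu|^{2}$ with a unique positive root $|\nu|^{2}=\bigl(R+\sqrt{R^{2}+4m(m+1)|A|^{2}}\bigr)/(2m(m+1))$; no cross-checks are needed. Second, in the Codazzi step the $\hat{\theta}^{k}$- and $\hat{\theta}^{\bar k}$-components of $\theta_{n}{}^{n}$ fall out easily as $\pm\hat{\partial}_{b}|\nu|/|\nu|$, but the $\hat{\theta}$-coefficient $b_{nn}$ requires one more differentiation (of $\theta_{j}{}^{n}$) and comes out as a sub-Laplacian expression in $|\nu|$; the paper carries this through explicitly to the formula
\[
|\nu|\,\theta_{n}{}^{n}=\hat{\partial}_{b}|\nu|-\bar{\hat{\partial}}_{b}|\nu|+i\Bigl(\tfrac{1}{m}\hat{\Delta}_{b}|\nu|+|\nu|^{3}+\tfrac{1}{m|\nu|}\bigl(2|\hat{\partial}_{b}|\nu||^{2}-|A|^{2}\bigr)\Bigr)\hat{\theta},
\]
which is exactly the ``isolate $\theta_{n}{}^{n}$ from covariant derivatives of $h$ and $\nu$'' step you anticipate. (Also: with the paper's Levi-metric convention one has $\langle\nu,Z_{n}\rangle=\pm|\nu|$, not $\tfrac12|\nu|$.)
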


\begin{proof}
The key point is that if $\nu(p)\neq 0$ for each point $p\in M$ , then we can always choose a Darboux frame $p\rightarrow (p;e_{\beta},Je_{\beta},T)$ such that 
\begin{equation}
e_{n}=-\frac{\nu}{|\nu|},\ \ e_{2n}=Je_{n}.
\end{equation}
Then we would like to compute the Darboux derivative of the Barboux frame.  It is equivalent to computing the restrictions of $\theta^{\beta}, \theta_{\gamma}{}^{\beta}$ to $M$. To finish the proof, we need to show the Darboux derivative is completely determined by the induced pseudohermitian structure.
\begin{equation}\label{nver1}
\begin{split}
d\theta^{\beta}&=\theta^{\gamma}\wedge\theta_{\gamma}{}^{\beta}\\
&=\hat{\theta}^{k}\wedge\theta_{k}{}^{\beta}+\theta^{n}\wedge\theta_{n}{}^{\beta}\\
&=\hat{\theta}^{k}\wedge\theta_{k}{}^{\beta}+\hat{\theta}\wedge(-|\nu|\theta_{n}{}^{\beta}).
\end{split}
\end{equation}
On the other hand,
\begin{equation}\label{nver2}
d\theta^{j}=d\hat{\theta}^{j}=\hat{\theta}^{k}\wedge\hat{\theta}_{k}{}^{j}+\hat{\theta}\wedge\hat{\tau}^{j},
\end{equation}
and
\begin{equation}\label{nver3}
d\theta^{n}=-d(|\nu|\hat{\theta})=\hat{\theta}^{k}\wedge(-i|\nu|\hat{\theta}^{\bar k})+\hat{\theta}\wedge(d|\nu|).
\end{equation}
From (\ref{nver1}), (\ref{nver2}) and (\ref{nver3}), there exists complex-valued functions $a^{j}_{\beta\gamma}$ such that $a^{j}_{\beta\gamma}=a^{j}_{\gamma\beta}$ and
\begin{equation}\label{nver4}
\begin{split}
\theta_{k}{}^{j}&=\hat{\theta}_{k}{}^{j}+a^{j}_{kl}\hat{\theta}^{l}+a^{j}_{kn}\hat{\theta},\\
-|\nu|\theta_{n}{}^{j}&=\hat{\tau}^{j}+a^{j}_{nl}\hat{\theta}^{l}+a^{j}_{nn}\hat{\theta},
\end{split}
\end{equation}
Also, there exists complex-valued functions $b_{\beta\gamma}$ such that $b_{\beta\gamma}=b_{\gamma\beta}$ and
\begin{equation}\label{nver5}
\begin{split}
\theta_{k}{}^{n}&=-i|\nu|\hat{\theta}^{\bar k}+b_{kl}\hat{\theta}^{l}+b_{kn}\hat{\theta},\\
-|\nu|\theta_{n}{}^{n}&=d|\nu|+b_{nl}\hat{\theta}^{l}+b_{nn}\hat{\theta}.
\end{split}
\end{equation}
From (\ref{nver4}),
\begin{equation}
\begin{split}
0&=\theta_{k}{}^{j}+\theta_{\bar j}{}^{\bar k}\\
&=(\hat{\theta}_{k}{}^{j}+\hat{\theta}_{\bar j}{}^{\bar k})+a^{j}_{kl}\hat{\theta}^{l}+a^{\bar k}_{\bar{j}\bar l}\hat{\theta}^{\bar l}+(a^{j}_{kn}+a^{\bar k}_{\bar{j}\bar n})\hat{\theta},
\end{split}
\end{equation}
hence 
\begin{equation}\label{nver7}
a^{j}_{kl}=0,\ \ a^{j}_{kn}+a^{\bar k}_{\bar{j}\bar n}=0,\ \ \textrm{for all}\ 1\leq j,k,l\leq m.
\end{equation}
Similarly, and notice that we write $\hat{\tau}^{j}=A^{j}{}_{\bar k}\hat{\theta}^{\bar k}$, we have 
\begin{equation}\label{nver9}
\begin{split}
b_{nl}&=-2(\hat{Z}_{l}|\nu|),\ \ \ b_{nn}+b_{\bar{n}\bar n}=-2(\hat{T}|\nu|);\\
a^{j}_{nl}&=i\delta_{jl}|\nu|^{2},\ \ \ b_{jl}=\frac{A^{\bar j}{}_{l}}{|\nu|},\\
a^{j}_{nn}&=|\nu|b_{\bar{j}\bar n}=-2|\nu|(\hat{Z}_{\bar j}|\nu|),
\end{split}
\end{equation}
for all $1\leq j,l\leq m$.
From (\ref{nver4}), (\ref{nver7}), (\ref{nver9}), we have, fro all $1\leq j,k\leq m$,
\begin{equation}\label{nver10}
\begin{split}
\theta_{k}{}^{j}&=\hat{\theta}_{k}{}^{j}+(i\delta_{jk}|\nu|^{2})\hat{\theta},\\
-|\nu|\theta_{n}{}^{j}&=\hat{\tau}^{j}+(i\delta_{jl}|\nu|^{2})\hat{\theta}^{l}-2|\nu|(\hat{Z}_{\bar j}|\nu|)\hat{\theta}.
\end{split}
\end{equation}
From (\ref{nver5}), (\ref{nver9}), 
\begin{equation}\label{nver11}
\begin{split}
\theta_{k}{}^{n}&=-i|\nu|\hat{\theta}^{\bar k}+\frac{A^{\bar k}{}_{l}}{|\nu|}\hat{\theta}^{l}-2(\hat{Z}_{k}|\nu|)\hat{\theta},\\
-|\nu|\theta_{n}{}^{n}&=(d|\nu|)-2(\hat{Z}_{l}|\nu|)\hat{\theta}^{l}+b_{nn}\hat{\theta}.
\end{split}
\end{equation}
From the look of (\ref{nver10}) and (\ref{nver11}), there is only one term $b_{nn}$ not determined yet. In order to complete the proof, we need to show that both $b_{nn}$ and $|\nu|$ are completely determined by the induced pseudohermitian structure. For this, using (\ref{nver10}) and (\ref{nver11}), we compute
\begin{equation*}
\begin{split}
d\theta_{k}{}^{j}&=\theta_{k}{}^{l}\wedge\theta_{l}{}^{j}+\theta_{k}{}^{n}\wedge\theta_{n}{}^{j}\\
&=\Big(\hat{\theta}_{k}{}^{l}+(i\delta_{lk}|\nu|^{2})\hat{\theta}\Big)\wedge\Big(\hat{\theta}_{l}{}^{j}+(i\delta_{jl}|\nu|^{2})\hat{\theta}\Big)\\
&+\frac{1}{|\nu|^2}\Big(i|\nu|^{2}\hat{\theta}^{\bar k}-A^{\bar k}{}_{l}\hat{\theta}^{l}+2|\nu|(\hat{Z}_{k}|\nu|)\hat{\theta}\Big)\wedge\Big(A^{j}{}_{\bar l}\hat{\theta}^{\bar l}+i|\nu|^{2}\hat{\theta}^{j}-2|\nu|(\hat{Z}_{\bar j}|\nu|)\hat{\theta}\Big),
\end{split}
\end{equation*}
that is,
\begin{equation}\label{nver12}
\begin{split}
&d\theta_{k}{}^{j}-\hat{\theta}_{k}{}^{l}\wedge\hat{\theta}_{l}{}^{j}\\
=&\frac{1}{|\nu|^{2}}\left(-iA^{\bar k}{}_{l}|\nu|^{2}\hat{\theta}^{l}\wedge\hat{\theta}^{j}-iA^{j}{}_{\bar l}|\nu|^{2}\hat{\theta}^{\bar l}\wedge\hat{\theta}^{\bar k}+(-A^{\bar k}{}_{l}A^{j}{}_{\bar q}+\delta_{j}^{l}\delta_{k}^{q}|\nu|^{4})\hat{\theta}^{l}\wedge\hat{\theta}^{\bar q}\right. \\
&\ \ \ \ \  +(2A^{\bar k}{}_{l}|\nu|(\hat{Z}_{\bar j}|\nu|)-2i\delta_{j}^{l}|\nu|^{3}(\hat{Z}_{k}|\nu|))\hat{\theta}^{l}\wedge\hat{\theta}\\&\ \ \ \ \ \ \ \ \left.+(-2A^{j}{}_{\bar q}|\nu|(\hat{Z}_{k}|\nu|)-2i\delta_{k}^{q}|\nu|^{3}(\hat{Z}_{\bar j}|\nu|))\hat{\theta}^{\bar q}\wedge\hat{\theta}\right)
\end{split}
\end{equation}
On the other hand, from (\ref{nver10}) and using the structure equations of the pseudohermitian structure, we have 
\begin{equation}\label{nver13}
\begin{split}
&d\theta_{k}{}^{j}-\hat{\theta}_{k}{}^{l}\wedge\hat{\theta}_{l}{}^{j}\\
=&d\hat{\theta}_{k}{}^{j}-\hat{\theta}_{k}{}^{l}\wedge\hat{\theta}_{l}{}^{j}+d(i\delta_{jk}|\nu|^{2}\hat{\theta})\\
=&R_{k}{}^{j}{}_{p\bar q}\hat{\theta}^{p}\wedge\hat{\theta}^{\bar q}+W_{k}{}^{j}{}_{p}\hat{\theta}^{p}\wedge\hat{\theta}-W^{j}{}_{k\bar p}\hat{\theta}^{\bar p}\wedge\hat{\theta}+i\hat{\theta}_{k}\wedge\hat{\tau}^{j}-\hat{\tau}_{k}\wedge\hat{\theta}^{j}\\
+&i\delta_{jk}\Big((\hat{Z}_{l}|\nu|^{2})\hat{\theta}^{l}\wedge\hat{\theta}+(\hat{Z}_{\bar l}|\nu|^{2})\hat{\theta}^{\bar l}\wedge\hat{\theta}\Big)-\delta_{jk}|\nu|^{2}\hat{\theta}^{l}\wedge\hat{\theta}^{\bar l}.
\end{split}
\end{equation}
Comparing the coefficients of the same terms in (\ref{nver12}) and (\ref{nver13}), and notice that $\hat{\tau}^{j}=A^{j}{}_{\bar k}\hat{\theta}^{\bar k}$, we get
\begin{equation}\label{nver15}
\begin{split}
R_{k}{}^{j}{}_{l\bar q}-\delta_{jk}\delta_{lq}|\nu|^{2}&=-\frac{A^{\bar k}{}_{l}A^{j}{}_{\bar q}}{|\nu|^{2}}+\delta_{j}^{l}\delta_{k}^{q}|\nu|^{2}\\
W_{k}{}^{j}{}_{l}+i\delta_{jk}(\hat{Z}_{l}|\nu|^{2})&=2\frac{A^{\bar k}{}_{l}}{|\nu|}(\hat{Z}_{\bar j}|\nu|)-2i\delta_{j}^{l}|\nu|(\hat{Z}_{k}|\nu|)\\
-W^{j}{}_{k\bar l}+i\delta_{jk}(\hat{Z}_{\bar l}|\nu|^{2})&=-2\frac{A^{j}{}_{\bar l}}{|\nu|}(\hat{Z}_{k}|\nu|)-2i\delta_{k}^{l}|\nu|(\hat{Z}_{\bar j}|\nu|),
\end{split}
\end{equation}
for all $1\leq j,k,l,q\leq m$. From the first equation of (\ref{nver15}), 
\begin{equation}\label{nver26}
\begin{split}
R_{k\bar j}&=R_{k\bar{j}l\bar l}=-\sum_{l=1}^{m}\frac{A^{\bar k}{}_{l}A^{j}{}_{\bar l}}{|\nu|^{2}}+\sum_{l=1}^{m}(\delta_{jk}\delta_{ll}+\delta_{j}^{l}\delta_{k}^{l})|\nu|^{2}\\
&=\left\{\begin{array}{l}-\sum_{l=1}^{m}\frac{A^{\bar k}{}_{l}A^{j}{}_{\bar l}}{|\nu|^{2}},\ \ \textrm{for}\ k\neq j\\
-\left(\sum_{l=1}^{m}\frac{A^{\bar k}{}_{l}A^{j}{}_{\bar l}}{|\nu|^{2}}\right)+(m+1)|\nu|^{2},\ \ \textrm{for}\ k=j.\end{array}\right.
\end{split}
\end{equation}
In particlar
\begin{equation}\label{nver27}
R=R_{k\bar k}=-\frac{|A|^{2}}{|\nu|^{2}}+m(m+1)|\nu|^{2}.
\end{equation}
Formula (\ref{nver27}) is equivalent to 
\begin{equation}\label{nver28}
|\nu|^{2}=\frac{R+\sqrt{R^{2}+4m(m+1)|A|^{2}}}{2m(m+1)}.
\end{equation}
Finally, we would like to compute $b_{nn}$. From (\ref{nver9}), we see that $b_{nn}+b_{\bar{n}\bar n}=-2(\hat{T}|\nu|)$, i.e.,
$b_{nn}=(-\hat{T}|\nu|)+i(\textrm{Im}b_{nn})$. So we only compute $\textrm{Im}b_{nn}$. For this, using (\ref{nver10}) and (\ref{nver11})
\begin{equation}\label{nver31}
\begin{split}
d\theta_{j}{}^{n}&=\theta_{j}{}^{k}\wedge\theta_{k}{}^{n}+\theta_{j}{}^{n}\wedge\theta_{n}{}^{n}=(\theta_{j}{}^{k}-\delta_{j}^{k}\theta_{n}{}^{n})\wedge\theta_{k}{}^{n}\\
&=\frac{1}{|\nu|^{2}}\left(|\nu|\theta_{j}{}^{k}+\delta_{j}^{k}(-|\nu|\theta_{n}{}^{n})\right)\wedge(|\nu|\theta_{k}{}^{n})\\
&=\frac{1}{|\nu|^{2}}\left(|\nu|\hat{\theta}_{j}{}^{k}+\delta_{j}^{k}\big[-(\hat{Z}_{l}|\nu|)\hat{\theta}^{l}+(\hat{Z}_{\bar l}|\nu|)\hat{\theta}^{\bar l}+i(|\nu|^{3}+\textrm{Im}b_{nn})\hat{\theta}\big]\right)\\
&\ \ \ \ \ \ \ \ \ \ \ \ \ \ \ \ \wedge\left(-i|\nu|^{2}\hat{\theta}^{\bar k}+A^{\bar k}{}_{l}\hat{\theta}^{l}-2|\nu|(\hat{Z}_{k}|\nu|)\hat{\theta}\right)\\
&=-i|\nu|\hat{\theta}_{j}{}^{k}\wedge\hat{\theta}^{l}+\hat{\theta}_{j}{}^{k}\wedge\left(\frac{A^{\bar k}{}_{l}}{|\nu|}\hat{\theta}-2(\hat{Z}_{k})\hat{\theta}\right)\\
&+\frac{1}{|\nu|^{2}}\left(-(\hat{Z}_{l}|\nu|)\hat{\theta}^{l}+(\hat{Z}_{\bar l}|\nu|)\hat{\theta}^{\bar l}+i(|\nu|^{3}+\textrm{Im}b_{nn})\hat{\theta}\right)\\
&\ \ \ \ \ \ \ \ \ \ \ \ \ \ \ \ \wedge\left(-i|\nu|^{2}\hat{\theta}^{\bar j}+A^{\bar j}{}_{l}\hat{\theta}^{l}-2|\nu|(\hat{Z}_{j}|\nu|)\hat{\theta}\right)
\end{split}
\end{equation}
On the other hand, using (\ref{nver11}) and the structure equations of the pseudohermitian structure,
\begin{equation}\label{nver32}
\begin{split}
d\theta_{j}{}^{n}&=d\left(i|\nu|\hat{\theta}^{\bar j}+\frac{A^{\bar j}{}_{l}}{|\nu|}\hat{\theta}^{l}-2(\hat{Z}_{j}|\nu|)\hat{\theta}\right)\\
&=-i|\nu|\hat{\theta}_{j}{}^{k}\wedge\hat{\theta}^{\bar k}+i|\nu|\hat{\tau}^{\bar j}\wedge\hat{\theta}-i(d|\nu|)\wedge\hat{\theta}^{\bar j}\\
&-2d(\hat{Z}_{j}|\nu|)\wedge\hat{\theta}-2(\hat{Z}_{j}|\nu|)d\hat{\theta}+d\left(\frac{A^{\bar j}{}_{l}}{|\nu|}\right)\wedge\hat{\theta}^{l}+\left(\frac{A^{\bar j}{}_{l}}{|\nu|}\right)d\hat{\theta}^{l}
\end{split}
\end{equation}
For each $j,\ 1\leq j\leq m$, comparing the coefficients of the terms in (\ref{nver31}) and (\ref{nver32}), we get
\begin{equation*}
\begin{split}
&2\left(\hat{Z}_{\bar j}(\hat{Z}_{j}|\nu|)-\hat{\theta}_{j}{}^{k}(\hat{Z}_{\bar j})(\hat{Z}_{k}|\nu|)\right)-i(\hat{T}|\nu|)\\
=&2\frac{\big|\hat{Z}_{j}|\nu|\big|^{2}}{|\nu|}+|\nu|^{3}+\textrm{Im}b_{nn}-\frac{\sum_{l=1}^{m}|A_{jl}|^{2}}{|\nu|},
\end{split}
\end{equation*}
Writing 
\begin{equation*}
|\nu|_{j\bar j}=\hat{Z}_{\bar j}(\hat{Z}_{j}|\nu|)-\hat{\theta}_{j}{}^{k}(\hat{Z}_{\bar j})(\hat{Z}_{k}|\nu|),
\end{equation*}
and taking the sum for $j$ over $1$ to $m$, we have
\begin{equation}\label{nver33}
-\hat{\Delta}_{b}|\nu|=-\overline{\Box}_{b}|\nu|-im(\hat{T}|\nu|)=2\frac{\big|\hat{\partial}_{b}|\nu|\big|^{2}}{|\nu|}+m(|\nu|^{3}+\textrm{Im}b_{nn})-\frac{|A|^{2}}{|\nu|},
\end{equation}
where $\Box_{b}$ and $\hat{\partial}_{b}$ are the Kohn Laplacian and $\partial_{b}$-operator on $M^{2m+1}$. Hence $b_{nn}$ is determined. Substituting (\ref{nver33}) into (\ref{nver11}), we get
\begin{equation}\label{nver34}
|\nu|\theta_{n}{}^{n}=\hat{\partial}_{b}|\nu|-\bar{\hat{\partial}}_{b}|\nu|+i\left(\frac{\hat{\Delta}_{b}|\nu|}{m}+|\nu|^{3}+\frac{\big(2\big|\hat{\partial}_{b}|\nu|\big|^{2}-|A|^{2}\big)}{m|\nu|}\right)\hat{\theta}.
\end{equation} 
This completes the proof.
\end{proof}

Theorem \ref{mainthm05} says that for pseudohermitian submanifolds of CR dimension $m=n-1$, in which there is no zero for $\nu$, the induced pseudohermitian structure constitute a complete set of invariant. Moreover, we have that if the pseudohermitian torsion of $M$ vanishes, then $M$ locally is part of the standard sphere as Theorem \ref{mainthm05} describes.

\begin{thm}\label{mainthm06}
Let $(M,\hat{J},\hat{\theta})$ be a simply connected pseudohermitian submanifold with CR dimension $m=n-1$. Suppose that the fundamental vector fields is {\bf nowhere zero}. If $A_{\beta\gamma}\equiv 0$, then the Webster curvature $R$ is constant, hence it is part of the standard sphere after a Heisenberg rigid motion.
\end{thm}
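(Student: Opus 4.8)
The plan is to reduce everything to the single assertion that $|\nu|$ is constant on $M$, and then to read off the sphere from the rigidity Theorem \ref{mainthm05}. The starting point is formula (\ref{nver27}), already established in the proof of Theorem \ref{mainthm05}, which says $R = -|A|^2/|\nu|^2 + m(m+1)|\nu|^2$. Under the hypothesis $A_{\beta\gamma}\equiv 0$, i.e. $|A|^2\equiv 0$, this collapses to
\begin{equation*}
R = m(m+1)\,|\nu|^2 .
\end{equation*}
Since $\nu$ is nowhere zero, $|\nu|^2 = R/(m(m+1))$, so showing that $R$ is constant is exactly the same as showing that $|\nu|$ is constant.

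First I would prove that $|\nu|$ is constant along the contact distribution. Because the intrinsic pseudohermitian torsion of $M$ vanishes identically, so do all of its covariant derivatives; in particular the tensor $W$ in the pseudohermitian second structure equation (\ref{nver13}), being a covariant derivative of the torsion, is zero. Feeding $A=0$ and $W=0$ into the second relation of (\ref{nver15}) gives, for all $j,k,l$,
\begin{equation*}
i\,\delta_{jk}\,\hat Z_l|\nu|^2 = -2i\,\delta_j^l\,|\nu|\,\hat Z_k|\nu| .
\end{equation*}
Writing $\hat Z_l|\nu|^2 = 2|\nu|\,\hat Z_l|\nu|$ and dividing by $2i|\nu|\neq 0$, the choice $j=k$ yields $\hat Z_l|\nu| = -\delta_j^l\,\hat Z_j|\nu|$; taking $l=j$ forces $\hat Z_j|\nu|=0$, and the remaining cases give $\hat Z_l|\nu|=0$ for every $l$. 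Conjugating, $\hat Z_{\bar l}|\nu|=0$ as well, so $\hat\partial_b|\nu|=0$ and $d|\nu|$ is a multiple of $\hat\theta$.

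Next I would eliminate the Reeb derivative. Since all horizontal first derivatives of $|\nu|$ vanish, the commutation argument gives $\overline\Box_b|\nu|=0$ and $|\hat\partial_b|\nu||^2=0$. Substituting these, together with $A=0$, into the integrability identity (\ref{nver33}) leaves
\begin{equation*}
-i\,m\,(\hat T|\nu|) = m\big(|\nu|^3 + \textrm{Im}\,b_{nn}\big).
\end{equation*}
The left-hand side is purely imaginary while the right-hand side is real, so both must vanish; in particular $\hat T|\nu|=0$. Combined with the previous step this gives $d|\nu|=0$, so $|\nu|$ — and hence $R=m(m+1)|\nu|^2$ — is a positive constant, which is the main claim.

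Finally, with $A\equiv 0$ and $R$ constant the induced structure of $M$ is a pseudohermitian space form: the first relation of (\ref{nver15}) gives $R_k{}^j{}_{l\bar q}=(\delta_{jk}\delta_{lq}+\delta_j^l\delta_k^q)|\nu|^2$, the model of constant positive holomorphic sectional curvature. The standard sphere $S^{2m+1}(r)\subset H_{m+1}$ of subsection \ref{stspsp} is completely non-vertical, has vanishing torsion, and satisfies $|\nu|=1/r$, so the choice $r=1/|\nu|$ matches its Webster curvature to that of $M$. A local pseudohermitian isomorphism $\phi$ from $M$ onto an open part of $S^{2m+1}(r)$ then exists, and Theorem \ref{mainthm05} upgrades $\phi$ to the restriction of a Heisenberg rigid motion. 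The step I expect to be the most delicate is this last identification: one must know that a completely non-vertical structure with vanishing torsion and constant curvature is locally pseudohermitian-isomorphic to the sphere before Theorem \ref{mainthm05} applies. Concretely I would verify this by specializing the explicit Darboux derivative of (\ref{nver10}), (\ref{nver11}) and (\ref{nver34}) to $A=0$ and $|\nu|$ constant, checking that $M$ and the sphere of radius $1/|\nu|$ have identical Darboux derivatives, and then invoking the uniqueness Theorem \ref{tutthm}.
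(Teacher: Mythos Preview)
Your argument that $|\nu|$ is constant is correct and in fact spells out what the paper leaves terse: the paper simply asserts ``from (\ref{nver15}) and (\ref{nver27}), we get $R=m(m+1)|\nu|^{2}=\text{constant}$'', whereas you extract the vanishing of the horizontal derivatives from the second line of (\ref{nver15}) and the vanishing of $\hat T|\nu|$ from the real/imaginary split of (\ref{nver33}). This part is essentially the same as the paper's, only more explicit.

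Where you diverge is in the concluding identification with the sphere. You propose to first build a pseudohermitian isomorphism between $M$ and an open piece of $S^{2m+1}(1/|\nu|)$ (invoking, in effect, the classification of torsion-free pseudohermitian space forms), and then apply Theorem~\ref{mainthm05} or Theorem~\ref{tutthm}. This works, but it leans on an external fact not proved in the paper and, as you yourself flag, is the delicate step. The paper avoids this entirely by a direct computation with the moving frame: once $A=0$ and $|\nu|$ is constant, formulas (\ref{nver11}) and (\ref{nver34}) specialize to $\theta_{k}{}^{n}=-i|\nu|\hat\theta^{\bar k}$ and $\theta_{n}{}^{n}=i|\nu|^{2}\hat\theta$, which unwind to the real relation $\omega_{a}{}^{2n}=-|\nu|\,\omega^{a}$ for $1\le a\le 2n-1$. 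Feeding this into the motion equation for $e_{2n}$ gives $d\big(e_{2n}/|\nu|\big)=dX$ on $M$, hence $X-X^{0}=e_{2n}/|\nu|$ for some fixed $X^{0}\in H_{n}$, which is precisely the equation of the horizontal sphere of radius $1/|\nu|$ about $X^{0}$. The paper's route is thus more self-contained and more geometric; your route is logically sound but trades a short computation for an appeal to the space-form classification.
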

\begin{proof}
Suppose that $A_{\beta\gamma}=0$. From (\ref{nver15}) and (\ref{nver27}), we get $R=m(m+1)|\nu|^{2}=constant$. Next we claim
\begin{equation}\label{cla01}
\omega_{a}{}^{2n}=-|\nu|\omega^{a},\ \ \ \textrm{for}\ a=1,\cdots,2n-1.
\end{equation}
From (\ref{nver11}), we have for $1\leq k\leq n-1$,
\begin{equation}\label{cla02}
\theta_{k}{}^{n}=-i|\nu|\hat{\theta}^{\bar{k}}=-i\nu|\theta^{\bar{k}}=-i|\nu|(\omega^{k}-i\omega^{n+k}).
\end{equation}
And from (\ref{nver34}),
\begin{equation}\label{cla03}
\theta_{n}{}^{n}=i|\nu|^{2}\hat{\theta}=-i\nu|\theta^{n}=-i|\nu|(\omega^{n}+i\omega^{2n}).
\end{equation}
On the other hand, we see that for $1\leq k\leq n$, we have 
\begin{equation}\label{cla04}
\theta_{k}{}^{n}=\omega_{k}{}^{n}+i\omega_{k}{}^{2n}=\omega_{n+k}{}^{2n}+i\omega_{k}{}^{2n}.
\end{equation}
Comparing (\ref{cla02}),(\ref{cla03}) and (\ref{cla04}), we get the claim (\ref{cla01}). In addition, we also have 
\begin{equation}\label{cla05}
\omega^{2n}=0,\ \ \omega^{n}=-|\nu|\theta.
\end{equation}
Substituting (\ref{cla01}) and (\ref{cla05}) into the motion equation
\begin{equation}
\begin{split}
de_{2n}&=e_{\beta}\otimes\omega_{2n}{}^{\beta}+e_{n+\beta}\otimes\omega_{2n}{}^{n+\beta}-T\otimes\omega^{n}\\
&=\sum_{\beta=1}^{n-1}e_{\beta}\otimes(|\nu|\omega^{\beta})+e_{n}\otimes(|\nu|\omega^{n})+\sum_{\beta=1}^{n-1}e_{n+\beta}\otimes(|\nu|\omega^{n+\beta})+T\otimes(|\nu|\theta).
\end{split}
\end{equation}
That is
\begin{equation}
d\left(\frac{e_{2n}}{|\nu|}\right)=e_{A}\otimes\omega^{A}+T\otimes\theta=dX,\ \ \ on\ \textrm{M}.
\end{equation}
We conclude that on $M$
\begin{equation}\label{stsp}
X-X^{0}=\frac{e_{2n}}{|\nu|},\ \ \ \textrm{for some }\ X^{0}\in H_{n}.
\end{equation}
Writing 
\begin{equation}
\begin{split}
\frac{e_{2n}}{|\nu|}&=a_{A}(X)\mathring{e}_{A}(X), \ \ \textrm{for some coefficient functions}\ a_{A}\\
X&=(X_{1},\cdots,X_{2n},X_{2n+1})\\
X^{0}&=(X^{0}_{1},\cdots,X^{0}_{2n},X^{0}_{2n+1}).
\end{split}
\end{equation}
From (\ref{stsp}), we have 
\begin{equation}
a_{A}(X)=X_{A}-X_{A}^{0},\ \ \ A=1,\cdots,2n,
\end{equation}
and hence 
\begin{equation}\label{stsp01}
\begin{split}
X-X^{0}&=\frac{e_{2n}}{|\nu|}=a_{A}(X)\mathring{e}_{A}(X)\\
&=(X_{\beta}-X_{\beta}^{0})\left(\frac{\partial}{\partial x_{\beta}}+X_{n+\beta}\frac{\partial}{\partial t}\right)+(X_{n+\beta}-X_{n+\beta}^{0})\left(\frac{\partial}{\partial y_{\beta}}-X_{\beta}\frac{\partial}{\partial t}\right)\\
&=(X_{\beta}-X_{\beta}^{0})\left(\frac{\partial}{\partial x_{\beta}}+X^{0}_{n+\beta}\frac{\partial}{\partial t}\right)+(X_{n+\beta}-X_{n+\beta}^{0})\left(\frac{\partial}{\partial y_{\beta}}-X^{0}_{\beta}\frac{\partial}{\partial t}\right)\\
&=a_{A}(X)\mathring{e}_{A}(X^{0}),
\end{split}
\end{equation}
with $\sum_{A=1}^{2n}a_{A}^{2}=\frac{1}{|\nu|^{2}}$. This completes the proof.
\end{proof}
 
 Theorem \ref{mainthm06} is the same as Theorem \ref{mainthm07}.
\begin{thm}\label{mainthm07}
Let $(M,\hat{J},\hat{\theta})$ be a simply connected pseudohermitian submanifold with CR dimension $m=n-1$. Suppose that the fundamental vector fields is {\bf nowhere zero}. If $II=0$, then the Webster curvature $R$ is constant, hence it is part of the standard sphere after a Heisenberg rigid motion.
\end{thm}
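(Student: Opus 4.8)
The plan is to prove Theorem \ref{mainthm07} by reducing it to Theorem \ref{mainthm06}: under the standing hypothesis that $\nu$ is nowhere zero, I would show that the vanishing of the second fundamental form $II$ is equivalent to the vanishing of the pseudohermitian torsion $A_{\beta\gamma}$ of $M$. Once this equivalence is established, the two theorems carry literally the same hypothesis and the same conclusion, so the assertions that $R$ is constant and that $M$ is an open part of the standard sphere after a Heisenberg rigid motion follow verbatim from Theorem \ref{mainthm06}.

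First I would specialize the second fundamental form to the CR codimension one setting $m=n-1$. Here the normal bundle $\hat{\xi}^{\perp}_{1,0}$ is spanned by the single field $Z_{n}$, so that
\begin{equation*}
II=\theta^{j}\otimes\theta_{j}{}^{n}\otimes Z_{n},
\end{equation*}
and its coefficients are $h_{jk}=II^{n}(\hat{Z}_{j},\hat{Z}_{k})=\theta_{j}{}^{n}(\hat{Z}_{k})$. Thus $II=0$ if and only if $\theta_{j}{}^{n}(\hat{Z}_{k})=0$ for all $j,k$. I would then invoke the computation already carried out in the proof of Theorem \ref{mainthm05}: with the adapted Darboux frame normalized by $e_{n}=-\nu/|\nu|$, formula (\ref{nver11}) gives
\begin{equation*}
\theta_{k}{}^{n}=-i|\nu|\hat{\theta}^{\bar k}+\frac{A^{\bar k}{}_{l}}{|\nu|}\hat{\theta}^{l}-2(\hat{Z}_{k}|\nu|)\hat{\theta},
\end{equation*}
where the $A^{j}{}_{\bar k}$ are the components of the pseudohermitian torsion of $M$, defined by $\hat{\tau}^{j}=A^{j}{}_{\bar k}\hat{\theta}^{\bar k}$. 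Reading off the coefficient of $\hat{\theta}^{l}$ then yields
\begin{equation*}
h_{kl}=\theta_{k}{}^{n}(\hat{Z}_{l})=\frac{A^{\bar k}{}_{l}}{|\nu|}.
\end{equation*}

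Since by assumption $\nu$ is nowhere zero, $|\nu|>0$ at every point, so $h_{kl}\equiv 0$ for all $k,l$ exactly when $A_{\beta\gamma}\equiv 0$. This establishes $II=0\Leftrightarrow A_{\beta\gamma}=0$ (the parenthetical equivalence already announced in Theorem~D), and the conclusion of Theorem \ref{mainthm07} is then identical to that of Theorem \ref{mainthm06}.

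The only delicate point is the index bookkeeping in the identification $h_{kl}=A^{\bar k}{}_{l}/|\nu|$: one must confirm that the torsion $A$ appearing in (\ref{nver11}) is the intrinsic pseudohermitian torsion of the submanifold $M$ (the one coming from $d\hat{\theta}^{j}$ in (\ref{nver2})) rather than an ambient quantity, and that the symmetry $h_{kl}=h_{lk}$ of $II$ is compatible with the symmetry of the torsion. Granting formula (\ref{nver11}), which is the substance of the proof of Theorem \ref{mainthm05}, no genuinely new computation is required; the entire content of this theorem is the observation that $\nu\neq 0$ converts the geometric hypothesis $II=0$ into the analytic hypothesis $A_{\beta\gamma}=0$.
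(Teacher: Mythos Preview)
Your proposal is correct and follows essentially the same route as the paper: the paper also reduces Theorem~\ref{mainthm07} to Theorem~\ref{mainthm06} by comparing the expression for $\theta_{j}{}^{n}$ from (\ref{incon2}) with the one in (\ref{nver11}) (using the same normalization $e_{n}=-\nu/|\nu|$) to obtain $h_{jk}=A_{jk}/|\nu|$, and then concludes $II=0\Leftrightarrow A_{jk}=0$ since $|\nu|\neq 0$.
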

\begin{proof}
Note that in the proof of Theorem \ref{mainthm03}, we choose a Darboux frame such that 
\begin{equation*}
e_{n}=-\frac{\nu}{|\nu|}.
\end{equation*}
This implies that $\big<\nu,Z_{n}\big>=-|\nu|$. Hence, from (\ref{incon2}), we have
\begin{equation*}
\theta_{j}{}^{n}=h_{jk}\hat{\theta}^{k}-i|\nu|\theta^{\bar j},\ \ \textrm{mod}\ \hat{\theta}.
\end{equation*}
Comparing with (\ref{nver11}), we get
\begin{equation*}
h_{jk}=\frac{A_{jk}}{|\nu|},\ \ 1\leq j,k\leq m.
\end{equation*}
Therefore 
\begin{equation*}
A_{jk}=0\Leftrightarrow II=0.
\end{equation*}
This complete the proof.
\end{proof}

\bibliography{main}

\begin{thebibliography}{99}


\bibitem{CCL} Cheng, S.S., Cheng, W.H. and Lam, K.S.: Lecture notes on
Differential Geometry.;

\bibitem{CCG} Calin, O., Chang, D.C. and Greiner, P.: Geometric Analysis on
the Heisenberg Group and Its Generalizations.;

\bibitem{CC} Calin, O., Chang, D.C.: Sub-Riemannian Geometry: General Theory
and Examples (Cambridge ; New York : Cambridge University Press, 2009).;

\bibitem{CL} Chiu, H.L. and Lai, S.H., The fundamental theorem for hypersurfaces in Heisenberg groups, Calc. Var. Partial Differential Equations 54 (2015), no. 1, 1091–1118.

\bibitem{DT} Dragamir, S. and Tomassini, G., Differential geometry and Analysis on CR  manifolds. \textit{Progress in Mathematics} v. 246 (2006)

\bibitem{G} Griffiths, P.,  On Cartan's Method of Lie Groups and Moving
frames as applied to Uniqueness and Existence questions in Differential
Geometry,\textit{ Duke Math. J.}\textbf{41} (1974), 775-814.

\bibitem{IL} Ivey, T.A. and Landsberg, J.M., Cartan for
Beginners:Differential Geometry via Moving Frames and Exterior Differential
Systems. \textit{Graduate Studies, in Math.} \textbf{v.61} (2003);

\bibitem{KO} Kim, S.-Y. and Oh, J.-W., Local embeddability of pseudohermitian manifolds into spheres, Math Ann. 334 (2006), 783-807.

\bibitem{Le1} Lee, J.M. : The Fefferman metric and pseudohermitian
invariants. \textit{Trans. Am. Math.Soc.} \textbf{296} (1986), 411-429 ;

\bibitem{Le2} Lee, J.M. : Pseudo-Einstein structures on CR manifolds.
\textit{Am. J. Math.} \textbf{110} (1988), 157-178 ;


\bibitem{PT} Palais, R.S., Terng, C.L.,  Critical Point Theory  and Submanifold Geometry. \textit{Lecture notes on
Differential Geometry} v.7. (1998).

\bibitem{S} Sharp, R.W., Differential Geometry, Cartan's
Generalization of Klein's Erlangen Program. \textit{Graduate Texts, in Math.}
{v.166} (1997);

\bibitem{T} Tanaka, N., A differential Geometric Study on strongly Pseudo-convex Manifolds. Kinokuniya Company Ltd., Tokyo, (1975);

\bibitem{We} Webster, S.M.. : Pseudo-Hermitian structures on a real
Hypersurface. \textit{J. Diff. Geom.} \textbf{13} (1978), 25-41;


\end{thebibliography}
\bibliographystyle{plain}

\end{document}